\newtheorem{thm}{Theorem}[section]
\newtheorem*{thm*}{Theorem}
\newtheorem{lemma}[thm]{Lemma}
\renewcommand{\b}[1]{\mathbf{#1}}
\newcommand{\beq}{\begin{equation}}
\newcommand{\eeq}{\end{equation}}
\newtheorem{remark}{Remark}
\def\A{\mathbb{A}}
\newcommand{\Q}{\mathbb{Q}}
\newcommand{\R}{\mathbb{R}}
\newcommand{\C}{\mathbb{C}}
\newcommand{\Nr}{\mathrm{N}}
\def\o{\operatorname{o}}
\def\i{\tilde{i}}
\def\b{\mathtt b}
\def\gp{\mathfrak p}
\newcommand{\GL}{\mathrm{GL}}
\newcommand{\kommentar}[1]{}
\newtheorem{theorem}{Theorem}
\newtheorem{example}{Example}
\newtheorem*{remark*}{Remark}
 \newtheorem{corollary}{Corollary}[section]
\definecolor{pink}{rgb}{1,.2,.6}
\definecolor{orange}{rgb}{0.7,0.3,0}
\definecolor{blue}{rgb}{.2,.6,.75}
\definecolor{green}{rgb}{.4,.7,.4}
\definecolor{purple}{RGB}{127,0,255}
\begin{document}
\numberwithin{equation}{section}

\title{Shortest nonzero lattice points in a totally real multi-quadratic Number field and applications}

 \author[Das]{Jishu Das}
 \address{Indian Institute of Science Education and Research Thiruvananthapuram, Maruthamala PO, Vithura,
Thiruvananthapuram - 695551,
 Kerala, India.}
 \email{jishudas@iisertvm.ac.in}

\keywords{multi-quadratic number fields, Lattice points, Petersson trace formula, Hilbert cusp forms}
\subjclass[2020]{Primary:11F41, 11R21, Secondary:  11F60, 11P21}
\thanks{}

\date{\today}

\begin{abstract} 
Let $F$ be a multi-quadratic totally real number field. Let $\sigma_1,\dots, \sigma_r$ denote its distinct embeddings. Given $s \in F,$ we give an explicit formula for $\| \sigma(s)\|$ and $\sum_{i<j} \sigma_i(s)\sigma_j(s),$ where $\| \sigma(s)\|=\sqrt{\sum_{i=1}^r(\sigma_i(s))^2}.$ Let $\mathfrak{M}$ be a fractional ideal in $F$ and $\min\left( \mathfrak{M}\right):=\min\{\|\sigma(s)\| \, | \, s \in \mathfrak{M}, s\neq 0 \}.$  The set of shortest nonzero lattice points for  $\mathfrak{M}$  is given by $\{s\in \mathfrak{M} : \| \sigma(s)\|=\min(\mathfrak{M})  \}.$ We provide shortest nonzero lattice points for  $\mathfrak{M}$ in terms of rational solutions to a given Diophantine equation. As an application, we get a refined asymptotic for the Petersson trace formula for the space of Hilbert cusp forms.  We then use the refined asymptotic to obtain a 
 lower bound analogue to Theorem \cite[Theorem 1.6]{JS}.
\end{abstract}
\maketitle


\section{Introduction}
Let $F$ be a totally real number field of degree $r$ and $\sigma_1,\dots, \sigma_r$ denote its distinct embeddings. For a fractional ideal $\mathfrak{M},$ the set $\sigma(\mathfrak{M})$ is a lattice in $\mathbb{R}^r$ being endowed with usual metric given by $\| \sigma(s)\|=\sqrt{\sum_{i=1}^r(\sigma_i(s))^2},$ where  $s\in \mathfrak{M}.$ Let $\min\left( \mathfrak{M}\right):=\min\{\|\sigma(s)\| \, | \, s \in \mathfrak{M}, s\neq 0 \}$ and shortest nonzero lattice points of $\mathfrak{M}$ is given by the set $B_\mathfrak{M}:=\{s\in \mathfrak{M} : \| \sigma(s)\|=\min(\mathfrak{M})  \}.$ A natural questions to ask is:      how does  $B_\mathfrak{M}$  look like.   One of the motivations for this question comes while obtaining a refined version (see Theorem \ref{Main application}) of an asymptotic \cite[Theorem 1]{BDS} for Petersson's trace formula \cite[Theorem 5.11]{KL} for Hibert cusp forms.  To be precise, for an integral ideal $\mathfrak{b}_i\mathfrak{N}$, let us consider equations \eqref{delta_i} and \eqref{A_i} for a fixed index $i$. It can be easily seen that $\delta_i=\min(\mathfrak{b}_i\mathfrak{N})$  and $A_i\subset B_{\mathfrak{b}_i\mathfrak{N}}.$  Thus having the knowledge of  $\min(\mathfrak{b}_i\mathfrak{N})$ and $B_{\mathfrak{b}_i\mathfrak{N}}$ help us to refine the asymptotic.  

 In this paragraph, we discuss the importance of Petersson's trace formula for cusp forms. 
 The discrepancy between two probability measures involving  eigenvalues of cusp forms
on a given closed interval has been a well-studied topic in number theory.
Upper bound estimates (see \cite[Theorem 2]{MSeffective}, \cite[Theorem 6]{MS2} , \cite[Theorem 1.1]{Lau-Li-Wang},  \cite[Theorem 1]{TW}, \cite[Theorem 2]{SZ}) and lower bound estimates (see \cite[Theorem 1.1, Theorem 1.6]{JS}, \cite[Theorem 3]{BDS}, \cite[Theorem 1, Theorem 2]{JD}) for 
discrepancy of two probability measures are of particular interest due to its various
applications to other problems. For upper bounds, the key ingredient is the Eichler-Selberg trace formula (\cite{Sel}) and its estimates. For both upper and lower bounds,  the key ingredient is using estimates for Petersson's trace formula (\cite{PH}).

 Exploring various algebraic objects concerning multi-quadratic fields has interested many mathematicians. For instance,  Balasubramanian, Luca and  Thangadurai give \cite[Theorem 1.1]{Balu} the exact degree of $\Q\left( \sqrt{d_1},\, \dots \,,\sqrt{d_n}\right)$. In 2022, Babu and Mukhopadhyay computed \cite[Theorem 3]{Babu} the explicit structure of the  Galois group of $\Q\left( \sqrt{d_1},\, \dots \,,\sqrt{d_n}\right)$ over $\Q.$ In 1966, Wada \cite{Wada} generalised Kubota's result \cite{Kubota} of finding fundamental units for quartic bicyclic fields by giving an algorithm for computing fundamental units in any given multi-quadratic field. Kuroda \cite{Kuroda} gave the formula for the class number of a multi-quadratic field in terms of class numbers of all its quadratic subfields.  Multi-quadratic fields occur naturally 
as the genus field of imaginary quadratic fields. To be precise, 
given an imaginary quadratic field $K$ with discriminant $d_K, $ let $p_1,\dots , p_n $ be the distinct prime divisors of $d_K.$ The genus field of $K$ is given \cite[Theorem 6.1]{Cox} by $\Q\left( \sqrt{p_1^*}, \dots, \sqrt{p_n^*} \right), $ where $p_i^*=(-1)^{\frac{p_i-1}{2}}p_i$ for all $i.$ The genus field of $K$  help us to find Hilbert class field of $K$ (see \cite[Section 6B]{Cox}). 
In this regard, we would like to obtain shortest nonzero lattice points for a fractional ideal in a  totally real multi-quadratic number field. 
First we obtain an explicit expression for $\|\sigma(s)\|$ given as follows. 
\begin{theorem}\label{mainthmsec1}
Let $d_1,\, \dots \, d_n$ be positive square-free integers such that  $ [\Q\left( \sqrt{d_1},\, \dots\, , \sqrt{d_n}\right):\Q]=2^n$.    Let $s\in \Q\left( \sqrt{d_1},\, \dots \,,\sqrt{d_n}\right)$ be given by
    $$
    a(0)+\sum_{i=1}^n b(i)\sqrt{d_i} +\sum_{i'=2}^n\sum_{j'=1}^{ n \choose i'}   c(i',j') \prod_{\alpha=1}^{i'} \sqrt{d_{s_\alpha}}
    $$
   (see Section \eqref{compute sigma s}). Then 
   $$\| \sigma(s)\| =\sqrt{2^n\left(a(0)^2+\sum_{i=1}^nd_i(b(i))^2  +\sum_{i'=2}^n\sum_{j'=1}^{ n \choose i'}    \left( c(i',j')\right)^2   \prod_{\alpha=1}^{i'} d_{s_\alpha} \right) }     .$$
\end{theorem}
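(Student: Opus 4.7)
The plan is to reindex the expansion of $s$ by subsets of $\{1,\ldots,n\}$. For $S \subseteq \{1,\ldots,n\}$, set $\sqrt{d_S} := \prod_{i\in S}\sqrt{d_i}$ (empty product equal to $1$), and write
$$
s = \sum_{S \subseteq \{1,\ldots,n\}} c_S \sqrt{d_S},
$$
identifying $c_\emptyset = a(0)$, $c_{\{i\}} = b(i)$, and taking the remaining $c_S$ to be the corresponding $c(|S|, j')$. In this notation the identity to prove becomes
$$
\|\sigma(s)\|^2 = 2^n \sum_S c_S^2 \prod_{i\in S} d_i.
$$

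Since $[F:\Q] = 2^n$, each $x^2 - d_j$ remains irreducible over $\Q(\sqrt{d_1},\ldots,\sqrt{d_{j-1}})$, so $\Gal(F/\Q) \cong (\mathbb{Z}/2\mathbb{Z})^n$ acts by independent sign flips $\sqrt{d_i} \mapsto \epsilon_i\sqrt{d_i}$ for $\epsilon = (\epsilon_1,\ldots,\epsilon_n) \in \{\pm 1\}^n$. Writing $\epsilon^S := \prod_{i\in S}\epsilon_i$, the $2^n$ embeddings are parametrized as $\sigma_\epsilon$ with $\sigma_\epsilon(\sqrt{d_S}) = \epsilon^S \sqrt{d_S}$, so that $\sigma_\epsilon(s) = \sum_S c_S \epsilon^S \sqrt{d_S}$.

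Next I would square and sum over all embeddings. Using $\epsilon_i^2 = 1$, one has $\epsilon^S\epsilon^T = \epsilon^{S \triangle T}$, where $\triangle$ denotes symmetric difference, so
$$
\sum_\epsilon \sigma_\epsilon(s)^2 = \sum_{S,T} c_S c_T \sqrt{d_S d_T} \sum_{\epsilon \in \{\pm 1\}^n} \epsilon^{S \triangle T}.
$$
The inner sum factors as $\prod_{i=1}^n \bigl(\sum_{\epsilon_i = \pm 1} \epsilon_i^{[i \in S \triangle T]}\bigr)$, which equals $2^n$ when $S = T$ and vanishes otherwise (character orthogonality for $(\mathbb{Z}/2\mathbb{Z})^n$). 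Hence every cross term disappears and
$$
\|\sigma(s)\|^2 = 2^n \sum_S c_S^2 \prod_{i\in S} d_i,
$$
which gives the claimed expression upon extracting square roots.

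The only real subtlety is justifying that the Galois group acts by independent sign flips; this follows from the hypothesis $[F:\Q]=2^n$, which is precisely where the non-degeneracy assumption is consumed. The character-orthogonality step and the subset-indexed re-bookkeeping are routine, though a bit of care is needed to match the theorem's enumeration of coefficients $c(i',j')$ over subsets of size $i'$ with the subset-indexed coefficients $c_S$ above.
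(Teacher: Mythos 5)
Your proposal is correct and takes essentially the same route as the paper: both expand $\sum_{\epsilon}\sigma_\epsilon(s)^2$ over all $2^n$ sign patterns and kill every cross term by summing signs over the group of sign changes, which is exactly what the paper does via its auxiliary lemma and the identity for $S'_n$. Your subset indexing with symmetric differences and the factorized character-orthogonality sum is simply a cleaner, uniform packaging of the paper's case-by-case cancellation, where indices differing in one binary coordinate are paired off so their contributions cancel.
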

As a corollary to the above theorem, we obtain an expression for trace of $s$ over $\Q$ ($\text{Tr}(s)$) and $\sum_{i<j}\sigma_i(s)\sigma_j(s).$
\begin{corollary}
   Let $s$ be as in Theorem \ref{mainthmsec1}. Then  $\text{Tr}(s)=2^na(0)$ and  $$
\sum_{i<j}\sigma_i(s)\sigma_j(s) =2^{2n-1}a(0)^2 -2^{n-1}\left(a(0)^2+\sum_{i=1}^nd_i(b(i))^2+\sum_{i'=2}^n\sum_{j'=1}^{ n \choose i'}  \left(c(i',j')\right)^2   \prod_{\alpha=1}^{i'} d_{s_\alpha}   \right).
$$    
\end{corollary}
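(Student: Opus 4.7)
The plan is to reduce everything to two ingredients: the explicit form of $\|\sigma(s)\|^2$ provided by Theorem \ref{mainthmsec1}, and the elementary Newton identity
\[
\bigg(\sum_{i=1}^r \sigma_i(s)\bigg)^{\!2} \;=\; \sum_{i=1}^r \sigma_i(s)^2 \;+\; 2\sum_{i<j}\sigma_i(s)\sigma_j(s),
\]
with $r=2^n$. Given that the right-hand sum of squares is $\|\sigma(s)\|^2$, the only thing that is not immediately at hand is $\text{Tr}(s)=\sum_i \sigma_i(s)$; this is what I would compute first.

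To compute the trace, I would use that the Galois group of $\Q(\sqrt{d_1},\ldots,\sqrt{d_n})/\Q$ is $(\Z/2\Z)^n$, with each embedding $\sigma$ determined by a sign vector $(\varepsilon_1,\dots,\varepsilon_n)\in\{\pm1\}^n$ via $\sigma(\sqrt{d_k})=\varepsilon_k\sqrt{d_k}$. Summing the expansion
\[
s = a(0) + \sum_{i=1}^n b(i)\sqrt{d_i} + \sum_{i'=2}^n\sum_{j'=1}^{\binom{n}{i'}} c(i',j')\prod_{\alpha=1}^{i'}\sqrt{d_{s_\alpha}}
\]
over all $2^n$ sign vectors, each summand containing a nonempty product $\prod_{\alpha}\sqrt{d_{s_\alpha}}$ contributes a factor $\sum_{\varepsilon}\prod_\alpha \varepsilon_{s_\alpha}=0$, because at least one coordinate is free to flip sign. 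Only the constant term $a(0)$ survives, giving $\text{Tr}(s)=2^n a(0)$, which is the first assertion.

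Squaring this and inserting into Newton's identity together with the expression for $\|\sigma(s)\|^2$ from Theorem \ref{mainthmsec1}, I would then solve for the desired symmetric function:
\[
\sum_{i<j}\sigma_i(s)\sigma_j(s)=\tfrac{1}{2}\!\left(\text{Tr}(s)^2-\|\sigma(s)\|^2\right)=2^{2n-1}a(0)^2-2^{n-1}\!\left(a(0)^2+\sum_{i=1}^n d_i\,b(i)^2+\sum_{i'=2}^n\sum_{j'=1}^{\binom{n}{i'}}c(i',j')^2\prod_{\alpha=1}^{i'}d_{s_\alpha}\right),
\]
matching the claimed formula exactly.

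There is essentially no obstacle: once Theorem \ref{mainthmsec1} is granted, the corollary is two lines. The only minor subtlety is making the sign-cancellation in the trace argument rigorous, which amounts to noting that for any nonempty subset $S\subseteq\{1,\dots,n\}$, $\sum_{\varepsilon\in\{\pm1\}^n}\prod_{k\in S}\varepsilon_k=\prod_{k\in S}(1+(-1))\cdot 2^{n-|S|}=0$; this handles every cross term uniformly regardless of which $\{d_{s_\alpha}\}$ it involves.
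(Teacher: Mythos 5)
Your proposal is correct and follows essentially the same route as the paper: the trace is obtained by summing the expansion of $s$ over all sign patterns and noting that every term containing a nonempty product $\prod_\alpha\sqrt{d_{s_\alpha}}$ cancels, and the second symmetric function is then extracted from $\bigl(\sum_i\sigma_i(s)\bigr)^2=\|\sigma(s)\|^2+2\sum_{i<j}\sigma_i(s)\sigma_j(s)$ together with the norm formula of Theorem \ref{mainthmsec1}. Your explicit remark that $\sum_{\varepsilon\in\{\pm1\}^n}\prod_{k\in S}\varepsilon_k=0$ for nonempty $S$ is just a cleaner statement of the cancellation the paper invokes via its Lemma \ref{Snlemma}-type argument.
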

We then classify shortest nonzero lattice points for fractional ideal $\mathfrak{M}$  as rational solutions to a given  Diophantine equation involving $\min\left(\mathfrak{M}\right)$. Precisely,
we obtain the following theorem. 
\begin{theorem}\label{main cor diophan}
    Let $d_1,\, \dots \, d_n$ be positive square-free integers with $ [\Q\left( \sqrt{d_1},\, \dots\, , \sqrt{d_n}\right):\Q]=2^n$. Let   $$S=\left\{ \left(x_0,x_1,\,\dots\, ,x_n,y_{2,1},y_{2,2},\,\dots\,, y_{2,{n \choose 2}},y_{3,1},\, \dots\, ,y_{n,1}\right)  \right\}$$ be the set of  rational solutions of the Diophantine equation 
 \begin{equation}\label{thm 2 dio}
      {2^n}\left(X_0^2+\sum_{i=1}^nd_iX_i^2
    +\sum_{i'=2}^n\sum_{j'=1}^{ n \choose i'}  \left(Y_{i',j'}\right)^2   \prod_{\alpha=1}^{i'} d_{s_\alpha}   
    \right)=(\min\left(\mathfrak{M}\right))^2.
     \end{equation}  
    Then for any  fractional ideal  $\mathfrak{M}$ of  $\Q\left( \sqrt{d_1},\, \dots\, \sqrt{d_n}\right)$,  shortest nonzero lattice points for  $\mathfrak{M}$ are precisely $$\left\{ x_0+ \sum_{i=1}^n x_i\sqrt{d_i}+\sum_{i'=2}^n\sum_{j'=1}^{ n \choose i'}  y_{i',j'}  \prod_{\alpha=1}^{i'} \sqrt{d_{s_\alpha}} \in \mathfrak{M} \,\Bigg|\,   \left(x_0,x_1,\,\dots\, ,x_n,y_{2,1},y_{2,2},\,\dots\,, y_{2,{n \choose 2}},y_{3,1},\, \dots\, ,y_{n,1}\right) \in S \right\}.$$
\end{theorem}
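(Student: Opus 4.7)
The plan is to derive Theorem \ref{main cor diophan} as an essentially immediate consequence of Theorem \ref{mainthmsec1}. Since $[\Q(\sqrt{d_1},\ldots,\sqrt{d_n}):\Q]=2^n$, the elements $1$, the $\sqrt{d_i}$, and the products $\prod_{\alpha=1}^{i'}\sqrt{d_{s_\alpha}}$ form a $\Q$-basis of $F:=\Q(\sqrt{d_1},\ldots,\sqrt{d_n})$. Hence every $s\in F$ admits a \emph{unique} expansion
$$
s = a(0) + \sum_{i=1}^n b(i)\sqrt{d_i} + \sum_{i'=2}^n\sum_{j'=1}^{\binom{n}{i'}} c(i',j')\prod_{\alpha=1}^{i'}\sqrt{d_{s_\alpha}}
$$
with rational coefficients. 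In particular the map sending $s$ to its coefficient tuple is a $\Q$-linear bijection between $F$ and $\Q^{2^n}$, and I would set this up explicitly at the start of the proof so as to freely identify an element of $F$ with the tuple $(a(0),b(1),\ldots,c(n,1))\in\Q^{2^n}$.

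For the forward inclusion, I would take $s\in B_{\mathfrak{M}}$, so by definition $s\in\mathfrak{M}$ and $\|\sigma(s)\|=\min(\mathfrak{M})$. Substituting the rational coefficients of $s$ into the formula of Theorem \ref{mainthmsec1} shows
$$
2^n\Big(a(0)^2+\sum_{i=1}^n d_i\,b(i)^2+\sum_{i'=2}^n\sum_{j'=1}^{\binom{n}{i'}} c(i',j')^2\prod_{\alpha=1}^{i'} d_{s_\alpha}\Big) = (\min(\mathfrak{M}))^2,
$$
which is exactly \eqref{thm 2 dio} evaluated at $X_0=a(0)$, $X_i=b(i)$, $Y_{i',j'}=c(i',j')$. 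Thus the coefficient tuple of $s$ lies in $S$, and $s$ belongs to the set on the right-hand side of the theorem.

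For the reverse inclusion, take any tuple $(x_0,\ldots,y_{n,1})\in S$ and assume the corresponding element $s\in F$ happens to lie in $\mathfrak{M}$. Applying Theorem \ref{mainthmsec1} in the other direction, the norm squared $\|\sigma(s)\|^2$ is precisely the left-hand side of \eqref{thm 2 dio}, which by the defining property of $S$ equals $(\min(\mathfrak{M}))^2$. Since $\min(\mathfrak{M})>0$, necessarily $s\neq 0$, and therefore $s\in B_{\mathfrak{M}}$. Combining both directions yields the claimed description of $B_{\mathfrak{M}}$.

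There is no genuine obstacle here: all the analytic content lives in Theorem \ref{mainthmsec1}, and the present statement is a bookkeeping reformulation that converts the explicit norm formula into a Diophantine condition on coordinates. The only points requiring care are (i) verifying that the coefficients in the multi-quadratic expansion are indeed rational, which uses $[F:\Q]=2^n$ so that the asserted basis is genuinely a basis, and (ii) being consistent about the indexing of the products $\prod_{\alpha=1}^{i'}\sqrt{d_{s_\alpha}}$ between the statement of Theorem \ref{mainthmsec1}, the equation \eqref{thm 2 dio}, and the set $S$.
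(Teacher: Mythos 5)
Your proposal is correct and matches the paper's own argument: both directions reduce to the explicit norm formula (Theorem \ref{Main cor 1}), with the forward inclusion giving that the coefficient tuple of a shortest vector satisfies \eqref{thm 2 dio}, and the reverse inclusion requiring only the additional membership check $s\in\mathfrak{M}$. Your write-up is just a more detailed version of the paper's proof, with the basis/uniqueness point and the $s\neq 0$ observation made explicit.
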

\begin{remark}
    As $\sigma(\mathfrak{M})$ is a discrete subset of $\mathbb{R}^{2^n},$  the Diophantine equation \eqref{thm 2 dio} has at least two solutions.   
\end{remark}
In Sect. \ref{sect3}, we discuss examples of $\min (\mathfrak{M})$ and shortest nonzero lattice points for some specific integral ideals. 
For the Diophantine equation
    $${2^n}\left(X_0^2+\sum_{i=1}^nd_iX_i^2
    +\sum_{i'=2}^n\sum_{j'=1}^{ n \choose i'}  \left(Y_{i',j'}\right)^2   \prod_{\alpha=1}^{i'} d_{s_\alpha}   
    \right)=(\min\left( \mathfrak{M}\right))^2,$$ 
    by a trivial solution, we refer to a solution with at most one nonzero unknown. That is, say if $X_1\neq 0,$ then we must have $X_i=0$ and $Y_{i',j'}=0$ for $2\leq i',i\leq n$  and $1\leq j'\leq {n\choose i'}. $
As an application, we obtain the following refined asymptotic for Petersson's trace formula. We refer to Sect. \ref{secapptraceformula} for notations involved in the following theorem.
\begin{theorem}\label{Main application}
Let $d_1,\, \dots \, d_n$ be square-free integers with $F=Q\left( \sqrt{d_1},\, \dots\, , \sqrt{d_n}\right) $ and  $ [\Q\left( \sqrt{d_1},\, \dots\, , \sqrt{d_n}\right):\Q]=2^n=r$. Suppose $\tilde{x}_i$ be a trivial solution of the Diophantine equation   \begin{equation}\label{Dieqnbin}
{2^n}\left(X_0^2+\sum_{\tilde{i}=1}^nd_{\tilde{i}}X_{\tilde{i}}^2
    +\sum_{i'=2}^n\sum_{j'=1}^{ n \choose i'}  \left(Y_{i',j'}\right)^2   \prod_{\alpha=1}^{i'} d_{s_\alpha}   
    \right)=(\min\left(\mathfrak{b}_i \mathfrak{N}\right))^2
    \end{equation} for some $i$,
    i.e. $2^n\tilde{x}_i^2\tilde{d_i}=(\min\left(\mathfrak{b}_i \mathfrak{N}\right))^2$ where $\tilde{d_i}=\prod_{l=1}^nd^{t_l}_l$ with $t_l\in \{0,1\}$  and $\tilde{d_0}=1.$
    Let $s_i'=\frac{\min\left(\mathfrak{b}_i \mathfrak{N}\right)}{2^{n-1}\sqrt{\tilde{d_i}}}, $
then under the assumptions of Theorem \ref{Main theorem BDS}, as $k_0\rightarrow  \infty$,
$$ 
\frac{e^{2\pi tr_{\mathbb{Q}}^F (m_1+m_2)}}{{\psi(\mathfrak{N})}}
 \Bigg[\prod_{j=1}^r  \frac{(k_j-2)!}{(4\pi \sqrt{\sigma_j(m_1m_2)})^{k_j-1}} \Bigg]
\sum_{{\phi} \in \mathcal{F} }\frac{\lambda_\mathfrak{n}^\phi W_{m_1}^\phi(1) \overline{W_{m_2}^\phi(1)}}{\|\phi \|^2}
 $$
$$=\, \Hat{T}(m_1,m_2,\mathfrak{n})\frac{\sqrt{d_F\Nr(\mathfrak{n})}}{\omega_\mathfrak{N}(m_1/\mathtt{s})\omega_{\mathrm{f}}(\mathtt{s})}
+ \sum_{i=1}^t \sum_{u\in U, \eta_i u\in F^+}\Bigg\{ \omega_{\mathrm{f}}(s_i'\b_i^{-1} ) S_{\omega_\mathfrak{N}} (m_1,m_2;\eta_i u \b_i^{-2};s_i'\b_i^{-1})
 $$$$ 
\frac{\sqrt{\Nr(\eta_i u)}}{\Nr(s_i')}\times  \prod_{j=1}^r\frac{2\pi}{(\sqrt{-1})^{k_j}}J_{k_j-1} \Big(  \frac{4 \pi\sqrt{\sigma_j (\eta_i u m_1m_2 )}}{|\sigma_j(s_i')|}\Big)
 \Bigg\}+ \o\left(\prod_{j=1}^r \big(k_j-1\big)^{-\frac{1}{3}}\right). $$
 Furthermore,  in case of non-existence of a  trivial solution to \eqref{Dieqnbin} for all $i$,  under the assumptions of Theorem \ref{Main theorem BDS}, as $k_0\rightarrow  \infty$,
 $$ 
\frac{e^{2\pi tr_{\mathbb{Q}}^F (m_1+m_2)}}{{\psi(\mathfrak{N})}}
 \Bigg[\prod_{j=1}^r  \frac{(k_j-2)!}{(4\pi \sqrt{\sigma_j(m_1m_2)})^{k_j-1}} \Bigg]
\sum_{{\phi} \in \mathcal{F} }\frac{\lambda_\mathfrak{n}^\phi W_{m_1}^\phi(1) \overline{W_{m_2}^\phi(1)}}{\|\phi \|^2}
 $$
$$=\, \Hat{T}(m_1,m_2,\mathfrak{n})\frac{\sqrt{d_F\Nr(\mathfrak{n})}}{\omega_\mathfrak{N}(m_1/\mathtt{s})\omega_{\mathrm{f}}(\mathtt{s})}+
 \o\left(\prod_{j=1}^r \big(k_j-1\big)^{-\frac{1}{3}}\right). $$
\end{theorem}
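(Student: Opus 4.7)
The plan is to start from the Petersson-type trace formula for Hilbert cusp forms in the form given by Theorem \ref{Main theorem BDS} of \cite{BDS}, which already decomposes the left-hand side as the diagonal main term $\hat T(m_1,m_2,\mathfrak n)\sqrt{d_F\Nr(\mathfrak n)}/(\omega_{\mathfrak N}(m_1/\mathtt s)\omega_{\mathrm f}(\mathtt s))$ plus a Kloosterman--Bessel sum indexed by cusps $1\le i\le t$, units $u\in U$ with $\eta_i u\in F^+$, and nonzero $c\in\mathfrak b_i\mathfrak N$, each term carrying the factor
$$\prod_{j=1}^r\frac{2\pi}{(\sqrt{-1})^{k_j}}\,J_{k_j-1}\!\left(\frac{4\pi\sqrt{\sigma_j(\eta_i u m_1 m_2)}}{|\sigma_j(c)|}\right),$$
with a global tail of order $o\bigl(\prod_j(k_j-1)^{-1/3}\bigr)$. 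The refinement is obtained by isolating the contribution of the shortest nonzero lattice points $c\in B_{\mathfrak b_i\mathfrak N}$, which produce the largest Bessel arguments and hence the slowest decay as $k_0\to\infty$.

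To exhibit these shortest-point contributions explicitly, I would apply Theorem \ref{main cor diophan}: the set $B_{\mathfrak b_i\mathfrak N}$ is the image, under the natural embedding, of the rational solution set $S$ of the Diophantine equation \eqref{Dieqnbin}. For a trivial solution with unique nonzero coordinate $\tilde x_i$ attached to the basis element $\sqrt{\tilde d_i}=\prod_l\sqrt{d_l}^{t_l}$, the corresponding lattice point is $c=\tilde x_i\sqrt{\tilde d_i}$, and the crucial feature is that $|\sigma_j(c)|=|\tilde x_i|\sqrt{\tilde d_i}$ is independent of $j$; using $2^n\tilde x_i^2\tilde d_i=\min(\mathfrak b_i\mathfrak N)^2$, this common value matches $|\sigma_j(s_i')|$ once the Kloosterman normalization with denominator ideal $s_i'\mathfrak b_i^{-1}$ is accounted for. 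Summing over the Galois orbit of such $c$ together with the unit action $u$ then reassembles the explicit displayed sum, including the factor $\omega_{\mathrm f}(s_i'\mathfrak b_i^{-1})\,S_{\omega_{\mathfrak N}}(m_1,m_2;\eta_i u\mathfrak b_i^{-2};s_i'\mathfrak b_i^{-1})\,\sqrt{\Nr(\eta_i u)}/\Nr(s_i')$.

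The main obstacle is showing that (i) non-trivial shortest lattice points and (ii) longer lattice points with $\|\sigma(c)\|>\min(\mathfrak b_i\mathfrak N)$ both contribute only to $o\bigl(\prod_j(k_j-1)^{-1/3}\bigr)$. In case (i), a non-trivial solution of \eqref{Dieqnbin} has at least two nonzero unknowns, and the Galois action then produces distinct absolute values across the embeddings; in case (ii), the inequality $\sum_j\sigma_j(c)^2>\min(\mathfrak b_i\mathfrak N)^2$ forces $\max_j|\sigma_j(c)|>\min(\mathfrak b_i\mathfrak N)/2^{n/2}$. In either situation some coordinate satisfies $|\sigma_j(c)|>(1+\delta)\min(\mathfrak b_i\mathfrak N)/2^{n/2}$ with a uniform gap $\delta>0$, coming from the discreteness of $\sigma(\mathfrak b_i\mathfrak N)\subset\mathbb R^r$; this pushes the Bessel argument in that coordinate into the exponentially-decaying regime $x<(k_j-1)(1-\delta')$. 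Combined with Weil's bound for Hilbert Kloosterman sums and a standard volume estimate on $\{c:\|\sigma(c)\|\le R\}$, this yields the required $o$-bound uniformly in $i$ and $u$. When no trivial solution of \eqref{Dieqnbin} exists for any $i$, the same argument applies to every $c\in B_{\mathfrak b_i\mathfrak N}$, absorbing the full Kloosterman--Bessel sum into the error and producing the second displayed formula.
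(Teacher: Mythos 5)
Your central refinement idea is the same as the paper's: the only Kloosterman--Bessel terms that can survive at the $\prod_j(k_j-1)^{-1/3}$ scale come from lattice points all of whose archimedean absolute values are equal, and by Theorem \ref{main cor diophan} such points are exactly the trivial solutions of \eqref{Dieqnbin}; your remark that a non-trivial solution has at least two nonzero coordinates and hence conjugates of unequal absolute value is the contrapositive of the paper's argument. Where you diverge is the starting point: you describe Theorem \ref{Main theorem BDS} as if its geometric side ran over all nonzero $c\in\mathfrak{b}_i\mathfrak{N}$, but in that theorem the sum is already truncated to the finite sets $A_i$ of \eqref{A_i}, with every other lattice point absorbed into the $\o\left(\prod_{j=1}^r(k_j-1)^{-\frac{1}{3}}\right)$ term. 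Consequently the paper needs no Bessel asymptotics, Weil bounds or volume counts at all: it only observes that $s\in A_i$ forces $\|\sigma(s)\|=\delta_i$ with $|\sigma_j(s)|=\delta_i/\sqrt{r}$ for every $j$, hence $s^2\in\Q$ (as in \cite[Lemma 10]{BDS}), hence a trivial solution of \eqref{Dieqnbin}, so $A_i$ is a singleton when a trivial solution exists and is empty otherwise. Your cases (i) and (ii) --- discarding non-trivial shortest points and all longer points through exponential decay of $J_{k_j-1}$ below the transition range, Weil's bound and lattice-point counts --- re-derive precisely the analytic content of \cite[Theorem 1]{BDS}; this alternative route is viable if you genuinely start from the exact Petersson formula (Theorem \ref{Petersson-Trace-Formula}), but then the tail estimate over the infinite lattice, uniformly in $i$ and $u$, must actually be executed rather than sketched (your ``uniform gap $\delta$'' from discreteness and the dyadic counting are the right ingredients, but they are exactly what the cited theorem already packages). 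In short, either quote Theorem \ref{Main theorem BDS} as stated, in which case your case-(i) paragraph is essentially the entire proof, or commit to redoing the full BDS analysis; also note that the sum in the trace formula is over $s\in\mathfrak{b}_i\mathfrak{N}/\pm$ rather than over Galois orbits, and that the uniqueness of the trivial solution (the squarefree part of $\min(\mathfrak{b}_i\mathfrak{N})^2/2^n$ determines $\tilde{d_i}$, and then $\tilde{x}_i$ up to sign) should be said explicitly to justify that exactly one term remains for each admissible $i$.
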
 
\begin{remark}
    Theorem \ref{Main application} is a refinement of Theorem \ref{Main theorem BDS} in the following manner. We managed to convert the triple sum into a double sum by showing $|A_i|=1,$ whenever equation \eqref{Dieqnbin} corresponding to the index $i$ has a trivial solution. Under the condition of no trivial solution for all $i$, the triple sum altogether vanishes. 
\end{remark}
At the end of Sect. \ref{sect3}, we give two important corollaries of Theorem \ref{Main application}, which are improvement  of \cite[Theorem 2,Theorem 3]{BDS} for a totally real multi-quadratic number field. Corollary \ref{application sato tate lower} is an analogue to \cite[Theorem 1.6]{JS}.
\section{Some algebraic identities and computation of $\| \sigma(s)\|$}
In this section, we prove an algebraic identity (Theorem \ref{S'n}) that explicitly gives $\|\sigma(s)\|$. After that, we classify shortest nonzero lattice points for an fractional ideal $\mathfrak{M}.$ We start with preliminary notations. For this section, let $R$ be a commutative ring with unity.  Let $k=\sum_{j=1}^r t_j \,2^{j-1}$ such that  each $t_i\in \{0,1\}. $ We have one to one correspondence between $k$ and $r$-tuple $(t_1,...\, ,t_r). $ For instance,  $2^r-1$ corresponds to the tuple $(1,1,\dots,1).$  Given $k,$ let $k(j)$ denote the $j$th coordinate of  $(t_1,...\, ,t_r)$ i.e.  $k(j)=t_j$. 
\begin{lemma}\label{Snlemma}
    Let  $a_0,\,\dots,\,a_{n-1}\in R$ with $n\geq 2$. Let $$S_n:=\sum_{k=0}^{2^{n-1}-1}\left(a_0+\sum_{i=1}^{n-1}(-1)^{k(i)}a_i\right)^2,$$ then 
    $$S_n=2^{n-1}\left(a_0^2+\dots +a_{n-1}^2\right).$$
\end{lemma}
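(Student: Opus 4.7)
The plan is to prove this identity by induction on $n$, isolating the final bit of $k$ and applying the parallelogram identity $(x+y)^2 + (x-y)^2 = 2(x^2+y^2)$.

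For the base case $n=2$, the index $k$ takes only the values $0$ and $1$, corresponding to tuples $(0)$ and $(1)$, so
$$S_2 = (a_0+a_1)^2 + (a_0-a_1)^2 = 2(a_0^2 + a_1^2),$$
which agrees with the claim.

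For the inductive step, assume the identity holds for $n-1$ with arbitrary elements of $R$. As $k$ ranges over $\{0,1,\dots,2^{n-1}-1\}$, its associated tuple $(k(1),\dots,k(n-1))$ ranges bijectively over $\{0,1\}^{n-1}$. I would partition this index set according to the value of $k(n-1)$. Writing $b_{k'} := a_0 + \sum_{i=1}^{n-2}(-1)^{k'(i)} a_i$, where $k'$ ranges over $\{0,\dots,2^{n-2}-1\}$, the terms with $k(n-1)=0$ contribute $(b_{k'}+a_{n-1})^2$ and those with $k(n-1)=1$ contribute $(b_{k'}-a_{n-1})^2$. Pairing these up and using the parallelogram identity gives
$$S_n = \sum_{k'=0}^{2^{n-2}-1}\bigl[(b_{k'}+a_{n-1})^2 + (b_{k'}-a_{n-1})^2\bigr] = 2\sum_{k'=0}^{2^{n-2}-1} b_{k'}^2 + 2\cdot 2^{n-2}\, a_{n-1}^2 = 2 S_{n-1} + 2^{n-1} a_{n-1}^2.$$
The induction hypothesis applied to $S_{n-1}$ (with the same $a_0,\dots,a_{n-2}$) yields $S_{n-1} = 2^{n-2}(a_0^2+\cdots+a_{n-2}^2)$, whence $S_n = 2^{n-1}(a_0^2+\cdots+a_{n-1}^2)$, as desired.

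There is essentially no obstacle here; the only point requiring care is the bookkeeping for the bit indices, namely that once $k(n-1)$ is fixed, the remaining bits $(k(1),\dots,k(n-2))$ range independently over $\{0,1\}^{n-2}$, which is what legitimizes the term-by-term pairing. As an alternative, one could expand the square directly and observe that $\sum_{k=0}^{2^{n-1}-1}(-1)^{k(i)} = 0$ and $\sum_{k=0}^{2^{n-1}-1}(-1)^{k(i)+k(j)} = 0$ for distinct $i,j \in \{1,\dots,n-1\}$, by orthogonality of characters on $\{0,1\}^{n-1}$; this kills all cross terms and leaves $2^{n-1}\sum_{i=0}^{n-1} a_i^2$. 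The inductive proof is shorter and avoids splitting into cases for $n=2$ versus $n\geq 3$.
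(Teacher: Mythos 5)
Your induction is correct: fixing the top bit $k(n-1)$ splits the index set into two halves in bijection with $\{0,1\}^{n-2}$, the parallelogram identity $(x+y)^2+(x-y)^2=2(x^2+y^2)$ gives the recursion $S_n=2S_{n-1}+2^{n-1}a_{n-1}^2$, and together with the base case $S_2=2(a_0^2+a_1^2)$ this yields the claim; nothing in the argument requires more than commutativity of $R$. The paper proceeds differently: it expands the square and shows that the coefficient of each cross term $a_ia_j$ with $i<j$ vanishes, by pairing every index $k$ with the index $k'$ obtained by flipping the $j$-th bit, so that the contributions $(-1)^{t_i+t_j}$ and $-(-1)^{t_i+t_j}$ cancel, leaving only the diagonal terms $2^{n-1}\sum_i a_i^2$. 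This is precisely the orthogonality observation you sketch as an alternative at the end of your write-up. The trade-off is worth noting: your recursion is shorter and cleaner for this lemma, but the paper's bit-flip cancellation is the technique it reuses almost verbatim in the proof of Theorem \ref{S'nlemma}, where the summands involve signs $(-1)^{\sum_\alpha k(t_{s_\alpha})}$ attached to products over varying index sets and a one-variable induction is less immediate; there the ``kill each cross term by an involution on the index set'' argument scales directly, which is presumably why the paper sets the lemma up that way.
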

\begin{proof}
    Let us denote the coefficient of $a_ia_j$ by $d_{i,j}.$   We show that $d_{i,j}=0$ whenever $i < j.$  First we  consider the case when  $i>0.$  Let $k\in\{1,\dots, 2^{n-1}-1 \}$ and $(t_1,\dots,t_i,\dots,t_j,\dots,t_{n-1})$ be the $(n-1)$-tuple corresponding to $k$. The contribution of $k$ for $d_{i,j}$ is $(-1)^{t_i+t_j}.$ Let $k'$ be an index that corresponds to the tuple $(t_1,\dots,t_i,\dots,t'_j,\dots,t_{n-1})$ where $t'_j=\frac{1+(-1)^{t_j}}{2}.$ The contribution of $k'$ for $c_{i,j} $ is $$(-1)^{t_i+\frac{1+(-1)^{t_j}}{2}}=-(-1)^{t_i+t_j}.$$ Since $k'\in \{1,\dots, 2^{n-1}-1 \}$ with $k'\neq k,$ $d_{i,j}=0.$ Other case (when $i=0$) can be proved  similarly. 
\end{proof}

Let   $s_1,\,\dots,\, s_{i'}$ be a permutation of $i'$ elements taken from the set  $\{1,\,\dots,\,n\} $ such that $s_1<s_2 < \dots < s_{i'} .$ There are $n \choose i'$ many ways of choosing   $s_1,\,\dots,\, s_{i'}$ and  we use the index $j'$ to iterate all those $n \choose i'$ possibilities. For instance, if $i'=n-1,$ then $j'$ varies from $1$ to $n$, where $j'=1$ corresponds to the permutation with $s_t=t$ for $1\leq t \leq n-1.$ Similarly $j'=2$ corresponds to $s_1=1$ and $s_t=t+1$ for $2 \leq t\leq n-1$.   Hence  there is a one to one correspondence between $j'$ and a permutation  $s_1,\,\dots,\, s_{i'}$ with $s_1<s_2 < \dots < s_{i'} .$ Let $a(i),b(i) \in R$ for $0\leq i\leq n.$
We now define a sum 
\begin{equation}\label{S'n}
    S'_n:=\sum_{k=0}^{2^n-1} \left( a(0)+\sum_{i=1}^n (-1)^{k(i)}b(i)a(i) +\sum_{i'=2}^n \sum_{j'=1}^{ n \choose i'} (-1)^{\sum_{\alpha=1}^{i'} k(t_{s_\alpha}) } c(i',j') \prod_{\alpha=1}^{i'} a(s_\alpha)\right)^2, 
\end{equation}
where $c(i',j')\in R$ for $i'\geq 2$, $1\leq j'\leq {n \choose i'}.$
\begin{theorem}\label{S'nlemma}
    Let $S'_n$ be given by the equation \eqref{S'n}. Then 
    \begin{equation}
        S'_n=2^n\left( \left(a(0)\right)^2+\sum_{i=1}^n\left(a(i)b(i)\right)^2+\sum_{i'=2}^n \sum_{j'=1}^{ n \choose i'}    \left(c(i',j')\right)^2 \prod_{\alpha=1}^{i'} (a(s_\alpha))^2\right).
    \end{equation}
\end{theorem}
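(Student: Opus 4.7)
The key idea is to index the summands inside the square by subsets $T \subseteq \{1,\dots,n\}$ and reduce the computation to a standard character-orthogonality argument over $\{0,1\}^n$. Every term in the inner expression has the form (coefficient)$\cdot (-1)^{\sum_{i\in T} k(i)}$ for a unique subset $T$: the constant $a(0)$ corresponds to $T=\emptyset$; each $b(i)a(i)$ corresponds to $T=\{i\}$; and each $c(i',j')\prod_{\alpha=1}^{i'}a(s_\alpha)$ corresponds to the $|T|=i'$ subset $T=\{s_1,\ldots,s_{i'}\}$ singled out by the pair $(i',j')$. Writing $C_T$ for the coefficient attached to $T$, the bracket in \eqref{S'n} becomes
\[
\sum_{T\subseteq\{1,\dots,n\}} C_T\,(-1)^{\sum_{i\in T}k(i)}.
\]

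With this packaging, squaring gives
\[
S'_n \;=\; \sum_{T_1,T_2}C_{T_1}C_{T_2}\sum_{k=0}^{2^n-1}(-1)^{\sum_{i\in T_1\triangle T_2}k(i)}.
\]
The inner sum factors as $\prod_{i=1}^n\bigl(\sum_{k(i)\in\{0,1\}}(-1)^{k(i)\cdot[i\in T_1\triangle T_2]}\bigr)$; each factor is $2$ if $i\notin T_1\triangle T_2$ and $0$ otherwise. Hence the inner sum equals $2^n$ when $T_1=T_2$ and $0$ otherwise. This is precisely the same cancellation mechanism that powers Lemma \ref{Snlemma}, generalised from pairs of indices to arbitrary subsets.

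After this orthogonality step only the diagonal survives, giving
\[
S'_n \;=\; 2^n \sum_{T\subseteq\{1,\dots,n\}} C_T^{\,2},
\]
and substituting the explicit coefficients $C_T$ in the four cases ($T=\emptyset$, $|T|=1$, and $|T|=i'\ge 2$ with index $j'$) yields exactly the claimed right-hand side.

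\textbf{Main obstacle.} The mathematics is essentially a one-line Fourier/orthogonality computation, so the only genuine difficulty is bookkeeping: making the correspondence $(i',j')\leftrightarrow T$ precise and verifying that the sign exponent $\sum_{\alpha=1}^{i'}k(t_{s_\alpha})$ in \eqref{S'n} is indeed $\sum_{i\in T}k(i)$. Once this dictionary is fixed, an induction on $n$ (mirroring Lemma \ref{Snlemma}, splitting the sum over $k(n)\in\{0,1\}$) could be given as an alternative, but the subset-indexing approach is cleaner and avoids re-indexing the $(i',j')$ parameters at each step.
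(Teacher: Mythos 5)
Your proof is correct, and it rests on the same underlying cancellation over $\{0,1\}^n$ as the paper's, but it is organised differently. The paper argues coefficient by coefficient: it splits the cross terms into cases (constant times linear, linear times product, product times product), handles the first two by the bit-flipping involution of Lemma \ref{Snlemma}, and for the product--product case decomposes the two index sets into their intersection and the leftover parts before invoking the same involution to get \eqref{sumk}. You instead index every summand in \eqref{S'n} by a subset $T\subseteq\{1,\dots,n\}$ with coefficient $C_T$ and compute the square in one stroke: the inner sum $\sum_{k}(-1)^{\sum_{i\in T_1\triangle T_2}k(i)}$ factors over coordinates and equals $2^n$ exactly when $T_1=T_2$ and $0$ otherwise, which is character orthogonality on $(\mathbb{Z}/2\mathbb{Z})^n$. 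Your packaging buys two things: all cross-term cases are dispatched uniformly without re-deriving the pairing argument for each shape of term, and the diagonal condition $T_1=T_2$ is made explicit --- note that the paper's final display, read literally, asserts that the entire double sum over $(i',j')$ and $(\tilde i,\tilde j)$ vanishes, which is only true off the diagonal, so your formulation is actually tighter on this point. What the paper's route buys is that it stays verbatim parallel to Lemma \ref{Snlemma} and never needs the subset dictionary; in your write-up that dictionary (in particular that the exponent $\sum_{\alpha=1}^{i'}k(t_{s_\alpha})$ in \eqref{S'n} really is $\sum_{i\in T}k(i)$ for $T=\{s_1,\dots,s_{i'}\}$) is the one bookkeeping step you should spell out, as you yourself flag. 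The argument is also valid over the general commutative ring $R$, since the orthogonality computation only ever multiplies ring elements by integers.
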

\begin{proof}
 On adapting the proof of Lemma \eqref{Snlemma},  for some fixed $i,i'$ each greater than $0$, it is not very difficult to see that the coefficient of $a(0)a(i)$ or $a(i)a(i')$, in the expansion of $S'_n$ is $0.$ 
 Now for a fixed $i\geq 0$, fixed $i'\geq 2$ and fixed $j',$ we compute the coefficient of $a(i)\prod_{\alpha=1}^{i'} a(s_\alpha).$ Note that 
 $$
 \sum_{k=0}^{2^n-1}b(i) a(i)\left( (-1)^{\sum_{\alpha=1}^{i'} k(t_{s_\alpha}) } c(i',j')\prod_{\alpha=1}^{i'} a(s_\alpha)\right)=b(i)a(i) c(i',j')\sum_{k=0}^{2^n-1}\left( (-1)^{\sum_{\alpha=1}^{i'} k(t_{s_\alpha}) }  \prod_{\alpha=1}^{i'} a(s_\alpha) \right)=0,
 $$
 where the last step follows from the argument similar to the proof of  Lemma \eqref{Snlemma}.
 
 For $i'\geq 2$ and $\i\geq 2$ fixed and their corresponding $1\leq j' \leq$$n\choose i'  $ and $1\leq \tilde{j}\leq$$ n \choose \tilde{i}  $ fixed, we  focus on computing the coefficient of
 $\left(\prod_{\alpha=1}^{i'} a(s'_\alpha)\prod_{\beta=1}^{\i} a(\tilde{s}_\beta)\right).$ 
 Let $A'=\{s'_1,\,\dots\,,s'_{i'}\}$ and $\tilde{A}=\{\tilde{s}_1,\,\dots\,,\tilde{s}_{\tilde{i}}\}$ and $A=A'\cap \tilde{A}=\{\underline{s}_1,\,\dots\,,\underline{s}_\gamma\}.$  
 Let $A'\setminus A =\{ s''_1,\, \dots \,, s''_{i'-\gamma}\}$ and  $ \tilde{A}\setminus A=\{ \tilde{\tilde{s}}_1,\, \dots \,, \tilde{\tilde{s}}_{\tilde{i}-\gamma}\}.$ 
Now we consider 
\begin{align*}
  &\sum_{k=0}^{2^n-1} \left((-1)^{\sum_{\alpha=1}^{i'} k(t_{s_\alpha}) } \prod_{\alpha=1}^{i'} a(s_\alpha) \right) \left((-1)^{\sum_{\beta=1}^{\i} k(t_{\tilde{s}_\beta}) } \prod_{\alpha=1}^{\i} a(s_\beta) \right)\\
  &=\sum_{k=0}^{2^n-1} \left((-1)^{\left(\sum_{\delta=1}^{\gamma}   2k(t_{\underline{s}_\delta})+\sum_{\alpha=1}^{i'-\gamma}  k(t_{s''_\alpha})+\sum_{\beta=1}^{\i-\gamma} k(t_{\tilde{\tilde{s}}_\beta})\right)  } \prod_{\delta=1}^{\gamma} \left(a(\underline{s}_\delta)\right)^2  \prod_{\alpha=1}^{i'-\gamma} a(s'_\alpha)  \prod_{\beta=1}^{\i-\gamma} a(\tilde{\tilde{s}}_\beta)  \right)\\
   &= \prod_{\delta=1}^{\gamma} \left(a(\underline{s}_\delta)\right)^2 \sum_{k=0}^{2^n-1} \left((-1)^{\left(\sum_{\alpha=1}^{i'-\gamma}  k(t_{s''_\alpha})+\sum_{\beta=1}^{\i-\gamma} k(t_{\tilde{\tilde{s}}_\beta})\right)  } \prod_{\alpha=1}^{i'-\gamma} a(s'_\alpha)  \prod_{\beta=1}^{\i-\gamma} a(\tilde{\tilde{s}}_\beta)  \right)=0.
\end{align*}
Hence we get 
\begin{equation}\label{sumk}
    \sum_{k=0}^{2^n-1} \left((-1)^{\sum_{\alpha=1}^{i'} k(t_{s_\alpha}) } \prod_{\alpha=1}^{i'} a(s_\alpha) \right) \left((-1)^{\sum_{\beta=1}^{\i} k(t_{\tilde{s}_\beta}) } \prod_{\alpha=1}^{\i} a(s_\beta) \right)=0.
\end{equation}
Therefore
\begin{align*}
&\sum_{k=0}^{2^n-1} \left(\sum_{i'=2}^n \sum_{j'=1}^{ n \choose i'}  (-1)^{\sum_{\alpha=1}^{i'} k(t_{s_\alpha}) }   c(i',j') \prod_{\alpha=1}^{i'}  a(s_\alpha) \right) \left( \sum_{\tilde{i}=2}^n    
\sum_{\tilde{j}=1}^{ n \choose \tilde{i}} (-1)^{\sum_{\beta=1}^{\i} k(t_{\tilde{s}_\beta}) }  c(\tilde{i},\tilde{j})\prod_{\alpha=1}^{\i} a(s_\beta) \right)\\
& =\sum_{k=0}^{2^n-1}  
\sum_{i'=2}^n \sum_{j'=1}^{ n \choose i'} \sum_{\tilde{i}=2}^n    \sum_{\tilde{j}=1}^{ n \choose \tilde{i}}     c(i',j') c(\tilde{i},\tilde{j})
\left((-1)^{\sum_{\alpha=1}^{i'}   k(t_{s_\alpha}) } \prod_{\alpha=1}^{i'}  a(s_\alpha) \right) \left((-1)^{\sum_{\beta=1}^{\i} k(t_{\tilde{s}_\beta}) } \prod_{\alpha=1}^{\i} a(s_\beta) \right)\\
&
=\sum_{i'=2}^n \sum_{j'=1}^{ n \choose i'} \sum_{\tilde{i}=2}^n    \sum_{\tilde{j}=1}^{ n \choose \tilde{i}}     c(i',j') c(\tilde{i},\tilde{j}) \left( \sum_{k=0}^{2^n-1}  
\left((-1)^{\sum_{\alpha=1}^{i'} k(t_{s_\alpha}) } \prod_{\alpha=1}^{i'}  a(s_\alpha) \right) \left((-1)^{\sum_{\beta=1}^{\i} k(t_{\tilde{s}_\beta}) } \prod_{\alpha=1}^{\i} a(s_\beta) \right) \right)=0
,
\end{align*}
where the last summation follows from equation \eqref{sumk}.
\end{proof}
\subsection{Computing $\| \sigma(s)\|$  }\label{compute sigma s}
Let  $d_1,\,\dots\,,d_n$ be squarefree natural numbers such that $ [\Q\left( \sqrt{d_1},\, \dots\, , \sqrt{d_n}\right):\Q]=2^n$.  Recall that, for $k=\sum_{j=1}^r t_j \,2^{j-1}$ with $t_i\in \{0,1\}, $ there is one to one correspondence between $k$ and $r$-tuple $(t_1,...\, ,t_r)$ and $k(j)=t_j.$ Note that  that $\left\{ \sqrt{d_1^{k(1)}\cdots d_n^{k(n)} }: 0\leq k\leq 2^n-1 \right\} $ is a basis for $\Q\left( \sqrt{d_1},\, \dots  \, , \sqrt{d_n}\right).$
Also for $i'=2,\dots , n,$ let $s_1,\dots,  s_{i'}$ be a permutation of $\{1,\dots ,n\}. $ Let  $j'$ is chosen in such a manner that it is in correspondence with $s_1,\dots, s_{i'}$ satisfying $s_1<s_2<\dots <s_{i'}$.
Hence any $s \in \Q\left( \sqrt{d_1},\, \dots  \, , \sqrt{d_n}\right)$ can be written as  \begin{equation}\label{write s}
 a(0)+\sum_{i=1}^n b(i)\sqrt{d_i} +\sum_{i'=2}^n\sum_{j'=1}^{ n \choose i'}  c(i',j')  \prod_{\alpha=1}^{i'} \sqrt{d_{s_\alpha}},
\end{equation}
where  $a(0), b(i), c(i',j') \in \Q$.   For $0\leq k\leq 2^n-1$,  it is not very difficult to see that embeddings of $\Q\left( \sqrt{d_1},\, \dots\, , \sqrt{d_n}\right)/\Q$ are given by  \begin{equation}\label{write ss}
\sigma_{k+1}(s) = a(0)+\sum_{i=1}^n (-1)^{k(i)}b(i) \sqrt{d_i} +\sum_{i'=2}^n \sum_{j'=1}^{ n \choose i'} (-1)^{\sum_{\alpha=1}^{i'} k(t_{s_\alpha}) } c(i',j') \prod_{\alpha=1}^{i'} \sqrt{d_{s_\alpha}}     .
\end{equation}
We now compute $\|\sigma(s)\|$ in the  following theorem.
\begin{theorem}\label{Main cor 1}
  Let $d_1,\,\dots\,,d_n$ be squarefree natural numbers such that $ [\Q\left( \sqrt{d_1},\, \dots\, , \sqrt{d_n}\right):\Q]=2^n$. Let $s\in \Q\left( \sqrt{d_1},\, \dots\, ,\sqrt{d_n}\right)$ be given by 
  \begin{equation}s=a(0)+\sum_{i=1}^n b(i)\sqrt{d_i} +\sum_{i'=2}^n\sum_{j'=1}^{ n \choose i'}  c(i',j')  \prod_{\alpha=1}^{i'} \sqrt{d_{s_\alpha}},
  \end{equation} 
  where $b(i), c(i',j') \in \Q$ for all $i, i',j' .$ Then $$\| \sigma(s)\|^2 =2^n\left(a(0)^2+\sum_{i=1}^nd_i(b(i))^2 
  +\sum_{i'=2}^n\sum_{j'=1}^{ n \choose i'}  \left(c(i',j')\right)^2   \prod_{\alpha=1}^{i'} d_{s_\alpha}   \right).$$
\end{theorem}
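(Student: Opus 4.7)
My plan is to derive Theorem \ref{Main cor 1} as an immediate corollary of Theorem \ref{S'nlemma} applied in the ring $R = \mathbb{R}$, with a specific choice of the free parameters.

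First, I would recall that $\|\sigma(s)\|^2 = \sum_{i=1}^{2^n}(\sigma_i(s))^2$, and that the embeddings have been computed explicitly in equation \eqref{write ss}: indexing by $0\le k \le 2^n-1$,
$$\sigma_{k+1}(s) = a(0)+\sum_{i=1}^n (-1)^{k(i)}b(i) \sqrt{d_i} +\sum_{i'=2}^n \sum_{j'=1}^{ n \choose i'} (-1)^{\sum_{\alpha=1}^{i'} k(t_{s_\alpha}) } c(i',j') \prod_{\alpha=1}^{i'} \sqrt{d_{s_\alpha}}.$$
The key observation is that this is, term-for-term, the $k$th summand in the definition \eqref{S'n} of $S'_n$ (taken in $R = \mathbb{R}$) under the specialisation $a(i) \mapsto \sqrt{d_i}$ for $1\le i \le n$, while keeping $a(0),b(i),c(i',j')$ as given. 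Consequently
$$\|\sigma(s)\|^2 \;=\; \sum_{k=0}^{2^n-1}(\sigma_{k+1}(s))^2 \;=\; S'_n\big|_{a(i)=\sqrt{d_i}}.$$

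Next I would invoke Theorem \ref{S'nlemma} to evaluate this sum, obtaining
$$\|\sigma(s)\|^2 = 2^n\left((a(0))^2 + \sum_{i=1}^n (\sqrt{d_i}\,b(i))^2 + \sum_{i'=2}^n\sum_{j'=1}^{n\choose i'}(c(i',j'))^2\prod_{\alpha=1}^{i'}(\sqrt{d_{s_\alpha}})^2\right).$$
Simplifying $(\sqrt{d_i}\,b(i))^2 = d_i(b(i))^2$ and $\prod_{\alpha=1}^{i'}(\sqrt{d_{s_\alpha}})^2 = \prod_{\alpha=1}^{i'}d_{s_\alpha}$ yields exactly the claimed identity. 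The hypothesis that $[\mathbb{Q}(\sqrt{d_1},\dots,\sqrt{d_n}):\mathbb{Q}] = 2^n$ is used implicitly to ensure that the representation \eqref{write s} of $s$ is unique and that the $2^n$ maps $\sigma_{k+1}$ are precisely the distinct embeddings, so that the sum $\sum_{k=0}^{2^n-1}(\sigma_{k+1}(s))^2$ indeed equals $\|\sigma(s)\|^2$.

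There is no real obstacle; essentially the entire combinatorial content (the cancellation of every cross term $a(i)a(j)$, $a(0)a(i)$, $a(i)\prod a(s_\alpha)$, and $\prod a(s'_\alpha)\prod a(\tilde s_\beta)$ with $i\ne j$ or $A'\ne \tilde A$) has already been discharged in Theorem \ref{S'nlemma}. The one point worth verifying carefully during write-up is that the sign exponents in \eqref{write ss} match those in \eqref{S'n} under the correspondence $k \leftrightarrow (k(1),\dots,k(n))$, which is immediate from the definition of $k(j)$, so the specialisation $a(i)=\sqrt{d_i}$ is legitimate and the conclusion follows.
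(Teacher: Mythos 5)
Your proposal is correct and follows essentially the same route as the paper: the paper's proof likewise writes $\|\sigma(s)\|^2=\sum_{i=1}^{2^n}(\sigma_i(s))^2$ with the embeddings given by \eqref{write ss} and then invokes Theorem \ref{S'nlemma} with $a(i)=\sqrt{d_i}$ for $i\geq 1$. Your additional remarks on matching the sign exponents and on the role of the degree hypothesis are accurate but add nothing beyond the paper's argument.
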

\begin{proof}
 Note that $\|\sigma(s)\|^2=\sum_{i=1}^{2^n} (\sigma_i(s))^2$ where $\sigma_i(s)$ are given by equation \eqref{write ss}. The proof then follows from Theorem \ref{S'nlemma} on taking $\sqrt{d_i}$ in the place of $a(i)$ for $i\geq 1.$
\end{proof}
\begin{remark}
    We note that $d_i'$s can be negative, this enables us to use Corollary \ref{Main cor 1} for any multi-quadratic extension, not specifically totally real ones. 
\end{remark}
\begin{remark}
    It must be noted that  $[\Q\left( \sqrt{d_1},\, \dots\, , \sqrt{d_n}\right):\Q]$ need to be $2^n$ always. A complete criterion for $[\Q\left( \sqrt{d_1},\, \dots\, , \sqrt{d_n}\right):\Q]$ is given by \cite[Theorem 1.1]{Balu}. For the case of  $[\Q\left( \sqrt{d_1},\, \dots\, , \sqrt{d_n}\right):\Q]=2^l$ with $l<n,$ without loss of generality, we can always choose $d_{i_1}, \,\dots \,, d_{i_l} \in\{ d_1,\dots, d_n\}$  such that $\Q\left( \sqrt{d_1},\, \dots\, , \sqrt{d_n}\right)=\Q\left( \sqrt{d_{i_1}},\, \dots\, , \sqrt{d_{i_l}}\right)$ and use Theorem \ref{Main cor 1}.
\end{remark}
\begin{corollary}
Let  $s$ be as given by equation \eqref{write s}. Let $\text{Tr}(s)$ denote the trace of $s$.  Let $\sigma_1,\dots,\sigma_{2^{n}}$ denote its real embeddings. Then $\text{Tr}(s)=2^na(0)$ and  $$
\sum_{i<j}\sigma_i(s)\sigma_j(s) =2^{2n-1}a(0)^2 -2^{n-1}\left(a(0)^2+\sum_{i=1}^nd_i(b(i))^2+\sum_{i'=2}^n\sum_{j'=1}^{ n \choose i'}  (c(i',j'))^2  \prod_{\alpha=1}^{i'} d_{s_\alpha}\right).
$$    
\end{corollary}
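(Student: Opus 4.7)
The plan is to compute $\mathrm{Tr}(s)$ directly from the formula \eqref{write ss} for the embeddings $\sigma_{k+1}(s)$, and then derive $\sum_{i<j}\sigma_i(s)\sigma_j(s)$ from the elementary symmetric-function identity
$$\left(\sum_{i=1}^{2^n}\sigma_i(s)\right)^2 \,=\, \sum_{i=1}^{2^n}\sigma_i(s)^2 \,+\, 2\sum_{i<j}\sigma_i(s)\sigma_j(s),$$
together with Theorem \ref{Main cor 1} (which gives $\sum_i \sigma_i(s)^2 = \|\sigma(s)\|^2$).

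For the trace, I would sum \eqref{write ss} over $k = 0,\dots,2^n-1$. The constant $a(0)$ contributes $2^n a(0)$. Each remaining term carries a sign of the form $(-1)^{k(i)}$ or $(-1)^{\sum_{\alpha=1}^{i'} k(t_{s_\alpha})}$, and the key observation (essentially the same cancellation argument used in the proofs of Lemma \ref{Snlemma} and Theorem \ref{S'nlemma}) is that pairing $k$ with the index obtained by flipping a single coordinate $k(j)\to 1-k(j)$ for some $j$ in the relevant subset forces the corresponding sum of signs to vanish. Hence every coefficient of $b(i)\sqrt{d_i}$ and of $c(i',j')\prod_\alpha \sqrt{d_{s_\alpha}}$ in $\sum_k \sigma_{k+1}(s)$ vanishes, yielding $\mathrm{Tr}(s)=2^n a(0)$.

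For the second identity I would substitute $\mathrm{Tr}(s)=2^n a(0)$ and Theorem \ref{Main cor 1} into the rearrangement
$$\sum_{i<j}\sigma_i(s)\sigma_j(s) \,=\, \tfrac{1}{2}\bigl(\mathrm{Tr}(s)^2 - \|\sigma(s)\|^2\bigr).$$
This gives
$$\sum_{i<j}\sigma_i(s)\sigma_j(s) \,=\, \tfrac{1}{2}\!\left[2^{2n}a(0)^2 - 2^n\!\left(a(0)^2 + \sum_{i=1}^n d_i(b(i))^2 + \sum_{i'=2}^n\sum_{j'=1}^{\binom{n}{i'}}(c(i',j'))^2\!\!\prod_{\alpha=1}^{i'}d_{s_\alpha}\right)\right],$$
which simplifies to the claimed expression after pulling out the factor $\tfrac{1}{2}$.

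There is essentially no obstacle here: the only nontrivial ingredient is the sign-cancellation, which is already established inside the proofs of Lemma \ref{Snlemma} and Theorem \ref{S'nlemma}, so the corollary is a direct consequence of the Newton-type identity combined with Theorem \ref{Main cor 1}. I would keep the write-up to a few lines, citing Theorem \ref{Main cor 1} for $\|\sigma(s)\|^2$ and reusing the pairing argument from Lemma \ref{Snlemma} for the trace computation.
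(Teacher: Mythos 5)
Your proposal is correct and follows essentially the same route as the paper: the trace is computed by summing the embeddings \eqref{write ss} and invoking the sign-cancellation $\sum_{k}(-1)^{k(i)}=0$ (resp. $\sum_k(-1)^{\sum_\alpha k(t_{s_\alpha})}=0$), and the second identity is then obtained by expanding $\left(\sum_i \sigma_i(s)\right)^2$ and substituting Theorem \ref{Main cor 1}. No gaps; your explicit rearrangement $\sum_{i<j}\sigma_i(s)\sigma_j(s)=\tfrac{1}{2}\bigl(\mathrm{Tr}(s)^2-\|\sigma(s)\|^2\bigr)$ is exactly what the paper leaves implicit.
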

\begin{proof}
Note that \begin{align*}&\text{Tr}(s)=\sum_{i=1}^{2^n} \sigma_i(s)=\sum_{k=0}^{2^{n}-1}\left(a(0)+\sum_{i=1}^n (-1)^{k(i)}b(i) \sqrt{d_i} +\sum_{i'=2}^n \sum_{j'=1}^{ n \choose i'} (-1)^{\sum_{\alpha=1}^{i'} k(t_{s_\alpha}) } c(i',j') \prod_{\alpha=1}^{i'} \sqrt{d_{s_\alpha}}\right)   \\
&
=2^na(0)+ \sum_{i=1}^n b(i) \sqrt{d_i} \left( \sum_{k=0}^{2^{n}-1} (-1)^{k(i)} \right)   +  \sum_{i'=2}^n \sum_{j'=1}^{ n \choose i'}      c(i',j') \prod_{\alpha=1}^{i'} \sqrt{d_{s_\alpha}} \left(  \sum_{k=0}^{2^{n}-1}          (-1)^{\sum_{\alpha=1}^{i'} k(t_{s_\alpha}) }\right)\\
&=2^na(0),
\end{align*}
as for  fixed $i'$ and $j'$, $\sum_{k=0}^{2^{n}-1}          (-1)^{\sum_{\alpha=1}^{i'} k(t_{s_\alpha}) }=0.$  
The computation of $\sum_{i<j}\sigma_i(s)\sigma_j(s)$ then follows from  Theorem \ref{Main cor 1}   and expanding $\left(\sum_{i=1}^{2^n} \sigma_i(s)\right)^2$.
\end{proof}
\begin{proof}[Proof of Theorem \ref{main cor diophan}]
In Theorem \ref{Main cor 1}, on taking  $a(i)=x_i,$ $c(i',j')=y_{i',j'},$ we see that  if  $$x_0+ \sum_{i=1}^n x_i\sqrt{d_i}+\sum_{i'=2}^n\sum_{j'=1}^{ n \choose i'}  y_{i',j'}  \prod_{\alpha=1}^{i'} \sqrt{d_{s_\alpha}}$$ is a shortest non-zero vector for $\mathfrak{M}$,  then $$\left(x_0,x_1,\,\dots\, ,x_n,y_{2,1},y_{2,2},\,\dots\,, y_{2,{n \choose 2}},y_{3,1},\, \dots\, ,y_{n,1}\right)$$ must satisfy the equation \eqref{thm 2 dio}. However, the converse is not necessarily true. Hence we additionally check whether $ x_0+ \sum_{i=1}^n x_i\sqrt{d_i}+\sum_{i'=2}^n\sum_{j'=1}^{ n \choose i'}  y_{i',j'}  \prod_{\alpha=1}^{i'} \sqrt{d_{s_\alpha}} $ lies in $\mathfrak{M}$ or not. 
\end{proof}
\section{Examples and applications}\label{sect3}
In the first subsection, we discuss some examples concerning $\min (\mathfrak{M}) $ and $B_{\mathfrak{M}}$. In the second subsection, we apply Theorem \ref{main cor diophan} to obtain a refined asymptotic for Petersson's trace formula and discuss some immediate applications of it. 
\subsection{Examples}
 We start with  a lemma concerning lower bound for $\| \sigma(s)\|$ which will be primarily used in this section. 
\begin{lemma}\label{sigma(s)>=r0.5nm(s)}
      Let $F$ be a number field of degree $r$. Then for $s\in F,$  we have \begin{align*}\|\sigma(s)\|\geq \sqrt{r} \left|\text{Nm}(s)\right|^{\frac{1}{r}}.\end{align*}
 \end{lemma}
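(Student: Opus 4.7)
The plan is to apply the AM--GM inequality to the squares $\sigma_1(s)^2, \dots, \sigma_r(s)^2$. Since $F$ is totally real (as is implicit in the paper's setup, so that $\|\sigma(s)\|=\sqrt{\sum_i \sigma_i(s)^2}$ is well-defined as a real number), each $\sigma_i(s)^2$ is a non-negative real number, so AM--GM applies cleanly:
$$\frac{1}{r}\sum_{i=1}^{r}\sigma_i(s)^2 \;\ge\; \left(\prod_{i=1}^{r}\sigma_i(s)^2\right)^{1/r}.$$

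Next I would identify the right-hand side with the norm. By definition, $\mathrm{Nm}(s)=\prod_{i=1}^r\sigma_i(s)$, so $\prod_{i=1}^r\sigma_i(s)^2=\mathrm{Nm}(s)^2=|\mathrm{Nm}(s)|^2$. Substituting gives
$$\sum_{i=1}^{r}\sigma_i(s)^2 \;\ge\; r\,|\mathrm{Nm}(s)|^{2/r}.$$

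Finally, taking the (positive) square root on both sides yields $\|\sigma(s)\|\ge \sqrt{r}\,|\mathrm{Nm}(s)|^{1/r}$, as required. I should probably also note the trivial edge case $s=0$, where both sides are $0$ and equality is vacuous. There is essentially no obstacle here; the only thing to be careful about is justifying that $\sigma_i(s)^2\ge 0$ (which needs totally real, or else one should replace $\sigma_i(s)^2$ by $|\sigma_i(s)|^2$ in the definition of the norm and then AM--GM still works with the pairs of complex embeddings appropriately grouped). Within the setting of the paper (totally real multi-quadratic fields), this is immediate.
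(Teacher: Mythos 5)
Your proof is correct, and it is a slightly more direct route than the paper's. The paper first applies the Cauchy--Schwarz inequality to pass from the quadratic mean to the arithmetic mean, namely $\sqrt{r\bigl(\sigma_1(s)^2+\cdots+\sigma_r(s)^2\bigr)}\ge |\sigma_1(s)|+\cdots+|\sigma_r(s)|$, and then applies AM--GM to the absolute values $|\sigma_i(s)|$ to reach $|\mathrm{Nm}(s)|^{1/r}$; combining the two steps gives the bound. You instead apply AM--GM once, directly to the squares $\sigma_i(s)^2$, obtaining $\frac{1}{r}\sum_i \sigma_i(s)^2 \ge |\mathrm{Nm}(s)|^{2/r}$ and then taking square roots. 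The two arguments prove the same chain of mean inequalities (quadratic mean $\ge$ geometric mean): the paper factors it through the arithmetic mean, while you skip that intermediate step, so your version is shorter and needs only one named inequality. Your remark about the totally real hypothesis (or, in general, grouping complex embeddings and working with $|\sigma_i(s)|^2$) is a sensible caveat that the paper's proof also glosses over, since the lemma is stated for an arbitrary number field but the norm $\|\sigma(s)\|$ is only set up in the totally real case; within the paper's setting both proofs are complete.
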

\begin{proof}
    Using Cauchy-Schwartz inequality, we have 
$$ \sqrt{(1^2+...+1^2)( \sigma_1^2(s)+ ... +\sigma_r^2(s))}\geq \left|\sigma_1(s)\right|+...+\left|\sigma_r(s)\right|. $$ 
 AM GM inequality implies $$ \frac{\left|\sigma_1(s)\right|+...+\left|\sigma_r(s)\right|}{r}\geq \left(\left|\sigma_1(s)\right|\times...\times \left|\sigma_r(s)\right|\right)^{\frac{1}{r}}=\left|\text{Nm}(s)\right|^{\frac{1}{r}}.$$
    Combining both, we obtain 
    \begin{equation*}
      \|\sigma(s)\|\geq \sqrt{r} \left|\text{Nm}(s)\right|^{\frac{1}{r}}.
    \end{equation*}
\end{proof}
     \begin{corollary}\label{examplecorollary}
   Consider an ideal $\mathfrak{M}=\langle s \rangle$ in $\Q\left( \sqrt{d_1},\, \dots\, , \sqrt{d_n}\right)$ with  $ [\Q\left( \sqrt{d_1},\, \dots\, , \sqrt{d_n}\right):\Q]=2^n$. Then   $\min(\mathfrak{M})\geq 2^{n-1} \left|  \text{Nm}(s) \right|^{\frac{1}{2^n}}.$
\end{corollary}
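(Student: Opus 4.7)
The plan is to reduce to Lemma \ref{sigma(s)>=r0.5nm(s)} via the ideal structure and then try to sharpen the resulting bound using the explicit formula of Theorem \ref{Main cor 1}. Since $\mathfrak{M}=\langle s\rangle$ is a principal integral ideal, every non-zero $t\in\mathfrak{M}$ can be written as $t=su$ for some $u\in\mathcal{O}_F\setminus\{0\}$. Because $u$ is a non-zero algebraic integer, $|\text{Nm}(u)|$ is a positive rational integer, so $|\text{Nm}(u)|\geq 1$ and hence $|\text{Nm}(t)|\geq |\text{Nm}(s)|$. Applying Lemma \ref{sigma(s)>=r0.5nm(s)} with $r=2^n$ to $t$ gives
$$\|\sigma(t)\|\;\geq\; \sqrt{2^n}\,|\text{Nm}(t)|^{1/2^n}\;\geq\; 2^{n/2}\,|\text{Nm}(s)|^{1/2^n},$$
and taking the infimum over $t\in\mathfrak{M}\setminus\{0\}$ recovers the weaker constant $2^{n/2}$.

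The stated constant $2^{n-1}$ exceeds $2^{n/2}$ by a factor of $2^{n/2-1}$, which is $1$ precisely for $n\leq 2$ and grows for larger $n$. My plan for the extra factor is to replace the single use of AM-GM in Lemma \ref{sigma(s)>=r0.5nm(s)} by an AM-GM done inside the explicit formula of Theorem \ref{Main cor 1}. Writing $t=a(0)+\sum_i b(i)\sqrt{d_i}+\sum_{i',j'}c(i',j')\prod_\alpha\sqrt{d_{s_\alpha}}$, Theorem \ref{Main cor 1} gives $\|\sigma(t)\|^2=2^n\,E(t)$, where $E(t)$ is a sum of $2^n$ non-negative terms of the shape $(\text{rational coefficient})^2\prod d_{s_\alpha}$. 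A short combinatorial count shows that each $d_i$ appears in exactly $2^{n-1}$ of these $2^n$ summands. Consequently, when every basis coefficient of $t$ is non-zero, AM-GM applied to the $2^n$ summands of $E(t)$ produces an additional geometric factor $\prod_i d_i^{1/2}$ beyond the quadratic-mean bound, and combining this with the product expansion of $\text{Nm}(t)$ (derived from Theorem \ref{S'nlemma}-type Galois cancellations) is intended to deliver the missing $2^{n/2-1}$.

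The main obstacle is the degenerate case where many of the rational coordinates of $t$ in the multi-quadratic basis vanish, because naive AM-GM over all $2^n$ summands then collapses to a trivial zero bound. Overcoming this requires a support-based case split: one restricts AM-GM to the non-vanishing summands of $E(t)$ and checks that the partial product still dominates $|\text{Nm}(s)|^{2/2^n}$ up to the needed factor, using $|\text{Nm}(u)|\geq 1$ and the fact that $t\neq 0$ forces at least one summand to survive. Organising this combinatorics uniformly in the support of $t$ is the most delicate step, and is where Theorem \ref{main cor diophan} would be used to parametrise the potential minimisers; this bookkeeping is where I expect the bulk of the work to sit.
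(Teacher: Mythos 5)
Your first paragraph is exactly the paper's proof: write a nonzero $t\in\mathfrak{M}$ as $t=su$ with $u\in\mathcal{O}\setminus\{0\}$, use $|\text{Nm}(u)|\geq 1$ to get $|\text{Nm}(t)|\geq|\text{Nm}(s)|$, apply Lemma \ref{sigma(s)>=r0.5nm(s)} with $r=2^n$, and take the infimum over nonzero $t$. That argument produces the constant $\sqrt{2^n}=2^{n/2}$, and this is all the paper actually does; it then records the constant as $2^{n-1}$, which agrees with $2^{n/2}$ only when $n=2$.

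The remainder of your proposal --- the attempt to manufacture the missing factor $2^{n/2-1}$ by a refined AM--GM inside the formula of Theorem \ref{Main cor 1} --- cannot be carried out, because the inequality with constant $2^{n-1}$ is false for $n\geq 3$. Take $s=1$, so $\mathfrak{M}=\mathcal{O}$ and $|\text{Nm}(s)|=1$; then $\|\sigma(1)\|=\sqrt{2^n}=2^{n/2}<2^{n-1}$, hence $\min(\mathcal{O})\leq 2^{n/2}$, contradicting the stated bound. More generally, equality $\|\sigma(t)\|=\sqrt{2^n}\,|\text{Nm}(t)|^{1/2^n}$ already holds for every rational $t$, so there is no uniform room above $2^{n/2}$, and the extra factor $\prod_i d_i^{1/2}$ you hope to extract cannot be present (it depends on the $d_i$, whereas the ratio $\min(\mathfrak{M})/|\text{Nm}(s)|^{1/2^n}$ can equal $2^{n/2}$ exactly). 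The constant $2^{n-1}$ in the corollary should be read as $2^{n/2}$; the same slip recurs in the proof of Corollary \ref{Single term}, where $\|\sigma(a\sqrt{\tilde{d}})\|$ is written as $2^{n-1}|a|\sqrt{\tilde{d}}$ although Theorem \ref{Main cor 1} gives $2^{n/2}|a|\sqrt{\tilde{d}}$. With that correction your first paragraph is a complete proof, identical to the paper's, and the last two paragraphs of your proposal should be discarded.
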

\begin{proof}
    Let $s' \in \mathfrak{M},$ then $s'=st$ which implies $\left|\text{Nm}(s')\right|\geq \left| \text{Nm}(s)\right|.$ Hence $\| \sigma(s') \|\geq 2^{n-1} \left|  \text{Nm}(s) \right|^{\frac{1}{2^n}}.$ Taking minimum over $s'\in \mathfrak{M}$ with $s'\neq 0$ proves the claim. 
\end{proof}
\begin{example}
    Consider an integral ideal $\mathfrak{M}=\langle s \rangle$ in $\Q\left( \sqrt{d_1},\, \dots\, , \sqrt{d_n}\right)$ with  $ [\Q\left( \sqrt{d_1},\, \dots\, , \sqrt{d_n}\right):\Q]=2^n$. Let $\mathcal{O}^\times$ denote the unit group of its ring of integers.  Let $s\in \Q\left( \sqrt{d_1},\, \dots\, ,\sqrt{d_n}\right)$ be given by $$s=a(0)+\sum_{i=1}^n b(i)\sqrt{d_i} +\sum_{i'=2}^n\sum_{j'=1}^{ n \choose i'} c(i',j')   \prod_{\alpha=1}^{i'} \sqrt{d_{s_\alpha}}$$
    and $\max(s)=\text{o}\left(a(0)\right),$ where $$\max(s)=\max\left( \{|b(i)|: i=1, \,\dots\, n \} \cup \{|\sqrt{d}_i|: i=1, \,\dots\, n \}  \cup \left\{|c(i',j')|: i'=2, \,\dots\, n, j'=1, \dots,  {n \choose i'} \right\}   \right).$$ Then $$\min\left( \mathfrak{M} \right)=\min\left(  \{ \|\sigma (us)\| :  u \in \mathcal{O}^\times  \} \right).$$
\end{example}
\begin{proof}
 Let $s' \in \mathfrak{M} $ with $\|\sigma(s')\|=\min (\mathfrak{M}) $ and  suppose $|\text{Nm}(s')|\geq 2 |\text{Nm}(s)|.$ Using Lemma \ref{sigma(s)>=r0.5nm(s)},  \begin{equation}\label{exampfeqn}
 \|\sigma(s)\|\geq \|\sigma(s')\| \geq  2^{\frac{n}{2}+\frac{1}{2^n}} |\text{Nm}(s)|^{\frac{1}{2^n}}.
 \end{equation}
 Using  Theorem \ref{Main cor 1} and the condition $\max(s)=a(0)+\text{o}\left(a(0)\right),$ we get  $\|\sigma(s)\|=2^{\frac{n}{2}}a(0)+o(a(0)).$
 The given condition further implies $|\text{Nm}(s)|=\left(a(0)\right)^{2^n}+o(a(0)). $ However, equation \eqref{exampfeqn} implies 
 $$
 2^{\frac{n}{2}}a(0)+o(a(0))\geq 2^{\frac{1}{2^n}} 2^{\frac{n}{2}} \left(\left(a(0)\right)^{2^n}+o(a(0))\right)^{\frac{1}{2^n}},
 $$
 an impossibility as $2^{\frac{1}{2^n}}>1 $ for a fixed $n.$ We must have $|\text{Nm}(s')|=  |\text{Nm}(s)|,$ which proves the claim.  
\end{proof}
\begin{remark}
   To find $\min(\mathfrak{M})$ with $\mathfrak{M}$ satisfying the assumptions given in the above example, it is sufficient to minimise $\| \sigma(us) \|$, where $u$ varies over $\mathcal{O}^\times$. For explicit computations of unit groups of  $\Q\left( \sqrt{d_1},\, \dots\, ,\sqrt{d_n}\right),$ one may refer to \cite{Wada}, \cite{Benjamin}, \cite{CEMM} and \cite{CEMMZ}. 
\end{remark}
For an ideal $\mathfrak{M},$ recall that  $B_{\mathfrak{M}}$ denote shortest nonzero lattice points of $\sigma(\mathfrak{M}),$ i.e. $B_{\mathfrak{M}}=\{ s \in \mathfrak{M}: \|\sigma(s)\|=\min (\mathfrak{M}) \}.$
\begin{example}
   Consider the    integral ideal  $\left\langle n+\sqrt{3} \right\rangle $ in $\mathbb{Q}\left(\sqrt{3}\right)$ and $n\geq 4.$ Then $\min\left(\langle n+\sqrt{3}\rangle \right)=2n^2+6$ and $B_{ \left\langle n+\sqrt{3} \right\rangle}=\{  n+\sqrt{3}, -n-\sqrt{3} \}.$
    \end{example}
    \begin{proof}
       Let $t=c+f\sqrt{3}$ be such that $\|\sigma(t)\|=\min \left(\langle n+\sqrt{3}\rangle \right).$ Suppose $t \in \left\langle n+\sqrt{3}\right\rangle $ be such that $|\text{Nm}(t)|\geq 2 \text{Nm}\left(n+\sqrt{3}\right).$ On applying Lemma \ref{sigma(s)>=r0.5nm(s)}, 
       \begin{align*}
     \sqrt{2n^2+6} \geq  \|\sigma(t)\|\geq \sqrt{2|\text{Nm}(t)|}\geq 2\sqrt{n^2-3}.
       \end{align*}
       This forces $2n^2+6\geq 4n^2-12$, an impossibility, as $n\geq 4.$ Hence $|\text{Nm}(t)|=\text{Nm}\left(n+\sqrt{3}\right)$ which implies $|c^2-3f^2|=n^2-3.$ We have $c^2+3f^2\leq n^2+3.$ 
       
       Suppose $c^2+3f^2\leq n^2+2,$ then $c^2+3f^2-|c^2-3f^2|\leq 5.$ We must have $2c^2\leq 5$ or $6f^2\leq 5.$ This implies either  $c=1,0$ or $f=0.$        
       For the case $c=0,$ we get $3f^2=n^2-3,$ which implies $3|n. $ On writing $n=3n'$ yields $f^2=3n'^2-1.$ On going modulo $3$, we get $f^2\equiv 2$ mod $3,$ an absurdity. Now we focus when $c=1.$ For such thing to happen  $3f^2-1$ must be equal to $n^2-3.$ Again going modulo $3$ yields $n^2\equiv 2$ mod 3, a contradiction. The case of $f=0$ is not possible as $n^2-3$ is never a perfect square for $n\geq 4.$  
       
       This forces $c^2+3f^2=n^2+3$ and $|c^2-3f^2|=n^2-3.$ The scenario of $c^2-3f^2=3-n^2$ would imply $c^2=3,$ an absurdity.  Hence only possible pairs of $(c,f)$ possible belong to $$\{(-n,-1),(n,1),(n,-1),(-n,1) \}.$$ Suppose $n-\sqrt{3} \in \left\langle n+\sqrt{3}\right\rangle, $ then $2\sqrt{3}\in \left\langle n+\sqrt{3}\right\rangle.$ Consequently $(n^2-3)|36$ which is not possible as $n\geq 4. $ Thus $B_{ \left\langle  n+\sqrt{3} \right\rangle}=\{ n+\sqrt{3},  -n-\sqrt{3} \}.$  
    \end{proof} 
\begin{example}
In this example, we consider a generic integral ideal of a real quadratic field generated by a single element.
  Let us consider a real quadratic field $\mathbb{Q}(\sqrt{d})$  with $d\equiv 2,3$ (mod $4$) and an integral ideal   $\mathfrak{M}$ generated by $ a+b\sqrt{d}$ with $a,b,\in \mathbb{Z}.$ For any $c,f \in \mathbb{Z}$,
       \begin{align*}
       &\left\| \sigma\left(\left(a+b\sqrt{d}\right)\left(c+f\sqrt{d}\right)\right)\right\|=\left\| \sigma\left( (ac+bdf)+(bc+af)\sqrt{d} \right)\right\|\\
       &=\sqrt{2\left( \left(ac+bdf\right)^2+d\left(bc+af\right)^2\right)}=\sqrt{\left(2a^2+2db^2\right)\left( c^2+df^2\right)+\left(8abd\right)cf}.
       \end{align*}
       The discriminant of the quadratic form $\left(2a^2+2db^2\right)\left( c^2+df^2\right)+\left(8abd\right)cf$ is given by 
       $$\left(8abd\right)^2- 4\left(2a^2\right)\left(2d^2b^2\right)=48a^2b^2d^2.$$ Hence the quadratic form is indefinite and its minimum can be computed using  \cite[Theorem A(iii), Theorem 1]{Peter}. This helps us to obtain $\min\left(\langle a+b\sqrt{d}\rangle \right).$
       
Now  let $c_0+f_0\sqrt{d}$ be such that $\left\| \sigma(c_0+f_0\sqrt{d})\right\|=\min\left( \langle a+b\sqrt{d}\rangle \right)$. We can assume $(c_0,d_0)=1,$ since on supposing  $(c_0,d_0)=g>1,$  $\left\|\sigma(c_0+f_0\sqrt{d})\right\| > \left\|\sigma\left(\frac{c_0}{g}+\frac{f_0}{g}\sqrt{d}\right)\right\|$, a contradiction to minimality.  To solve the Diophantine equation   $2c_0^2+2df_0^2-(\min\left(\mathfrak{M}\right))^2=0,$ one can refer to the classic book by Mordell \cite[Chapter 19]{Mordell}. For the scenario of  $\frac{(\min\left(\mathfrak{M}\right))^2}{2}=p,$ for a prime integer $p$, one may refer to  \cite[Theorem 5.26]{Cox}.  
\end{example}
\begin{example}[{{\cite[Lemma 12]{BDS}}}]
    Let us consider the ideal  $\langle n\rangle  $ in a totally real field $F$ with $n \in \mathbb{Z} $ and $[F:Q]=r. $ Then $\min\left( \langle n\rangle\right)=|n|\sqrt{r}$  and $B_{\langle n \rangle} = \{n, -n\}.$
\end{example}
\subsection{An application to Petersson trace formula}\label{secapptraceformula}
Petersson's trace formula for cusp forms gives us a weighted trace for   Hecke operators (see for instance  \cite[Equation (2.1)]{JD}). For classical cusp forms, one can refer to \cite[Section 2.1.3]{JS}. Recently, Jung and Sardari \cite[Theorem 1.7]{JS}, Das \cite[Theorem 4]{JD} have used asymptotic for Petersson's trace formula to obtain a lower bound for the discrepancy in the Sato-Tate measure (see \cite[Theorem 1.6]{JS}, \cite[Theorem 1]{JD}).   

 Now we focus our attention on Hilbert cusp forms.  
Let $F $ be a totally real number field and $r=[F :\mathbb{Q}].$
Let $\nu=\nu_{\gp}$ be the discrete valuation corresponding to a prime ideal $\gp$. Let $F_\nu$ be the completion of $F$ with respect to the valuation $\nu$. Let $\mathcal{O}_\nu $ be the ring of integers of the local field $F_\nu$ and  $\hat{\mathcal{O}}=\prod_{\nu<\infty} \mathcal{O}_\nu.$ 
Let $\A$ denote the ad\`ele ring of $F$ with finite ad\`eles $\A_{f}$, so that $\A=F_{\infty} \times \A_{f}$ where $F_\infty =F \otimes \mathbb{R} \cong \mathbb{R}^r$.
For a fractional ideal $\mathfrak{a},$ let $\Hat{\mathfrak{a}}=\prod_{\nu<\infty} \mathfrak{a}_\nu$ where  $\mathfrak{a}_\nu$ denote its localisation.

Let $\mathfrak{N}$ be an integral ideal of $\mathcal{O}$. Let $k=(k_1,\,...\,,k_r) $ be an $r$-tuple of integers with $k_j> 2$. Let $\omega : F^\times \backslash \A^\times \rightarrow \mathbb{C}^\times$ be a unitary Hecke character. We can decompose $\omega$ as a product of local characters, $\omega=\prod_\nu \omega_\nu$ where $\omega_\nu: F_\nu^\times \to \mathbb C^\times$ are the local characters. Let $\omega_{f}=\prod_{\nu<\infty} \omega_\nu $ We further assume that
\begin{enumerate}
\item the conductor of $\omega$ divides $\mathfrak{N}$,
\item $\omega_{\infty_j}(x)=\mathrm{sgn} (x)^{k_j} $ for all $j=1,\dots, r$.
\end{enumerate}
The first condition means that $\omega_\nu $ is trivial on $1+\mathfrak{N} \mathcal{O}_\nu$ for all $\nu | \mathfrak{N},$ and unramified for all $\nu \nmid \mathfrak{N} .$
Let $K_{f}=\prod_{\nu<\infty} \GL_2(\mathcal{O}_\nu)$ be the standard maximal compact subgroup of $\GL_2 (\A_{f}) .$
Let 
\begin{equation}\label{K_1-def}
K_1(\mathfrak{N})=  \left\{ \begin{pmatrix} a & b\\  c & d \end{pmatrix} \in K_{f}  : c \in \mathfrak{N} \Hat{\mathcal{O}}, d\in 1+ \mathfrak{N}\Hat{\mathcal{O}} \right\},
\end{equation} and let   $A_k(\mathfrak{N},\omega) $ be the  space  of Hilbert cusp forms with respect to  $K_1(\mathfrak{N})$ (see \cite[Proposition 3.1]{KL} for definition of $A_k(\mathfrak{N},\omega)$), of weight $k$ and  unitary Hecke character $\omega$.

 In 2008, Knightly and Li gave the Petesson's trace formula for the space $A_k(\mathfrak{N},\omega).$  For convenience, we recall the formula after some more notations. 
\begin{itemize}
    \item Let $\sigma_1, \dots, \sigma_r$ be the embeddings of $F$ into $\R$ and let $\sigma = (\sigma_1, \dots, \sigma_r) : F \to \R^r$.  
\item Let $\mathfrak{n}$ and $\mathfrak{N}$ be ideals in $\mathcal O$ such that $( \mathfrak{n}, \mathfrak{N}) = 1$.  
\item Let $F^{+}$ denote the set of totally positive elements of $F$.
Let $d_F$ denote the discriminant of $F$.
\item Let $\mathcal{O}^\times$ denote the unit group of $F$, and let $U$ be a fixed set of representatives for $\mathcal{O}^\times/{\mathcal{O}^\times}^2$.  $U$ is a finite set, and $|U| = 2^r$.
\item Let $N' :F\rightarrow \mathbb{Q}$ denote the norm map. For a nonzero ideal $\mathfrak{a}\subset \mathcal{O} $, let $\Nr(\mathfrak{a})=|\mathcal{O}/\mathfrak{a}|.$ For $\alpha \in F^\times,$ we have $$\Nr(\alpha):=\Nr(\alpha\mathcal{O})=|N'(\alpha)|.$$ 
\item  Let $\mathfrak{d} ^{-1}=\{x\in F \, : \,  \mathrm{tr}_{\mathbb{Q}}^{F}  (x)\subset \mathbb{Z}  \}$
denote the inverse different, where $\mathrm{tr}_{\mathbb{Q}}^{F}  (x)$ denotes the trace of $x\in F$. We also let $\mathfrak{d}_+^{-1}=\mathfrak{d}^{-1}\cap F^+$.
\item For $m \in \mathfrak{d}^{-1}_{+}, W^\phi_{m}(1)$ are Whittaker function evaluated at $1$ (see \cite[Equation 10]{KL}).
\end{itemize}
We refer the reader to  \cite[Section 4.3]{KL} for the definition of the Hecke operators $T_{\mathfrak{n}}$.
\begin{theorem}[{{\cite[Thm.\ 5.11]{KL}}}]\label{Petersson-Trace-Formula}
Let $\mathfrak{n}$ and $\mathfrak{N}$ be integral ideals with $(\mathfrak{n},\mathfrak{N})=1.$ Let $k=(k_1,...,k_r)$ with all $k_j>2.$ Let $\mathcal{F}$ be an orthogonal basis for $A_{k}(\mathfrak{N},\omega)$ consisting of eigenfunctions for the Hecke operator $T_\mathfrak{n}.$ Then for any $m_1,m_2\in \mathfrak{d}_+^{-1}$, we have 
\begin{multline}  \label{PTF}
\frac{e^{2\pi \mathrm{Tr}_{\mathbb{Q}}^F (m_1+m_2)}}{{\psi(\mathfrak{N})}} \Bigg[\prod_{j=1}^r  \frac{(k_j-2)!}{(4\pi \sqrt{\sigma_j(m_1m_2)})^{k_j-1}}\Bigg] \sum_{{\phi}\in \mathcal{F} } \frac{\lambda^\phi_\mathfrak{n} W_{m_1}^\phi(1) \overline{W_{m_2}^\phi(1)}}{\| \phi \|^2} 
=  \Hat{T}(m_1,m_2,\mathfrak{n}) \frac{\sqrt{d_F\Nr(\mathfrak{n})}}{\omega_\mathfrak{N}(m_1/\mathtt{s})\omega_{f}(\mathtt{s})} \\
+ \sum_{i=1}^t\sum_{\substack{u\in U \\ \eta_i u\in F^+}} \sum_{\substack{s\in \mathfrak{b_i}\mathfrak{N}/\pm \\ s\neq 0 }} \Bigg\{ \omega_{f}(s\mathtt{b}_i^{-1} ) S_{\omega_\mathfrak{N}} (m_1,m_2;\eta_i u \mathtt{b}_i^{-2};s\mathtt{b}_i^{-1})  \\
 \times  \frac{\sqrt{\Nr(\eta_i u)}}{\Nr(s)} \times\prod_{j=1}^r\frac{2\pi}{(\sqrt{-1})^{k_j}}J_{k_j-1} \left(  \frac{4 \pi\sqrt{\sigma_j (\eta_i u m_1m_2 )}}{|\sigma_j(s)|} \right) \Bigg\}.
\end{multline}  
\begin{itemize}
     \item where $\Hat{T}(m_1,m_2,\mathfrak{n})\in \{0,1\}$ is non zero if and only if there exists $\mathtt{s}\in \Hat{\mathfrak{d}}^{-1}$ such that $m_1, m_2
\in  \mathtt{s}\Hat{\mathcal{O}}   $ and $m_1m_2 \Hat{\mathcal{O}}=\mathtt{s}^2\Hat{\mathfrak{n}}$,
        \item $U$ is a set of representative for $\mathcal{O}^\times/{\mathcal{O}^\times}^2$,
        \item  For the equation  $[\mathfrak{b}]^2[\mathfrak{n}]=1$ in the ideal class group, let  integral ideals $\mathfrak{b}_i$s are a set of representatives for  $i=1,\,\dots\,,t$. 
 Let $\mathtt{b_i}\Hat{\mathcal{O}}=\Hat{\mathfrak{b}}_i$ for all $i.$
        \item $\eta_i\in F$ generates the principal ideal $\mathfrak{b}_i^2\mathfrak{n}$,
        \item $\omega_\mathfrak{N}=\prod_{v|\mathfrak{N}}\omega_v \prod_{v \nmid \mathfrak{N}} 1$,
        \item and $\psi(\mathfrak{N})=[K_{f} : K_0(\mathfrak{N})]=\Nr(\mathfrak{N})\prod_{\mathfrak{p}|\mathfrak{N}}\Big(1+\frac{1}{\Nr(\mathfrak{p})}\Big)$.
    \end{itemize}
   \end{theorem}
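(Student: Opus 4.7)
The plan is to follow the classical Poincar\'e-series / Bruhat-unfolding strategy on the adelic group $\GL_2(\A)$ at the congruence level $K_1(\mathfrak N)$. For each $m\in\mathfrak d_+^{-1}$, I would construct an $m$-th Poincar\'e series $P_m\in A_k(\mathfrak N,\omega)$ by summing a distinguished adelic test vector over $N(F)\backslash \GL_2(F)$: at the archimedean places the lowest-weight holomorphic Whittaker vector of weight $k$, and at the finite places a suitable $K_1(\mathfrak N)$-compatible section twisted by $\omega_{f}$. The first key property to establish is the reproducing identity
\begin{equation*}
\langle \phi, P_m\rangle \;=\; c_{k,m}\cdot e^{-2\pi\,\mathrm{tr}_\Q^F(m)}\cdot \overline{W_m^\phi(1)} \qquad (\phi\in A_k(\mathfrak N,\omega)),
\end{equation*}
with $c_{k,m}$ the explicit archimedean constant $\psi(\mathfrak N)^{-1}\prod_j (k_j-2)!/(4\pi\sqrt{\sigma_j(m)})^{k_j-1}$. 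This is verified by a single unfolding: the inner product against $P_m$ collapses to an integral of the $m$-th Fourier coefficient of $\phi$ over $N(F)\backslash N(\A)\cdot A(\R)$, evaluating directly to $W_m^\phi(1)$ times the displayed constant.

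Expanding $T_{\mathfrak n}P_{m_1}$ in the orthogonal Hecke eigenbasis $\mathcal F$ and pairing it against $P_{m_2}$ via the reproducing identity reduces the spectral side of (\ref{PTF}) to $\langle T_{\mathfrak n}P_{m_1},P_{m_2}\rangle$, which I would then compute geometrically: unfold one of the two Poincar\'e sums, absorb the Hecke operator into the summation through its coset representatives, and split the resulting sum by the Bruhat decomposition $\GL_2(F)=B(F)\sqcup B(F)wB(F)$. The identity cell contributes only when there exists $\mathtt s\in\widehat{\mathfrak d}^{-1}$ with $m_1,m_2\in\mathtt s\widehat{\mathcal O}$ and $m_1m_2\widehat{\mathcal O}=\mathtt s^2\widehat{\mathfrak n}$, giving the main term $\widehat T(m_1,m_2,\mathfrak n)\sqrt{d_F\,\Nr(\mathfrak n)}/(\omega_\mathfrak N(m_1/\mathtt s)\omega_{f}(\mathtt s))$, with $\sqrt{d_F}$ the measure of $\mathfrak d^{-1}\backslash F_\infty$ and the character factor recording how $\omega$ acts on the Hecke compatibility datum. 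In the big cell I would parameterize $N(F)\backslash B(F)wB(F)/N(F)$ by the nonzero lower-left entry $c\in F^\times$: writing $c\mathcal O=\mathfrak b_i\mathfrak N\cdot (s)$ for the ideal-class representative $\mathfrak b_i$ with $[\mathfrak b_i]^2[\mathfrak n]=1$, decomposing the totally positive generator of $\mathfrak b_i^2\mathfrak n$ modulo squares of units as $\eta_i u$ with $u\in U$, and letting $s$ range over $\mathfrak b_i\mathfrak N/\{\pm 1\}$, splits the $c$-sum into the triple sum of (\ref{PTF}). The finite unipotent integrations evaluate to the twisted Kloosterman sum $S_{\omega_\mathfrak N}(m_1,m_2;\eta_i u\mathtt b_i^{-2};s\mathtt b_i^{-1})$, and at each archimedean place the Whittaker integration reduces to the classical Bessel formula
\begin{equation*}
\int_0^\infty e^{-2\pi(m_1 y+m_2 c^2/y)}\, y^{k-1}\,\tfrac{dy}{y} \;\sim\; \Bigl(\tfrac{m_2 c^2}{m_1}\Bigr)^{(k-1)/2}\cdot \tfrac{2\pi}{(\sqrt{-1})^k}\, J_{k-1}\!\bigl(4\pi|c|\sqrt{m_1 m_2}\bigr),
\end{equation*}
producing the $J_{k_j-1}$ factors and the signs $(\sqrt{-1})^{k_j}$.

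The principal obstacle is not conceptual but bookkeeping: one must match, coefficient by coefficient, the factor $\psi(\mathfrak N) = [K_{f}:K_0(\mathfrak N)]$ arising from volume normalization on $K_1(\mathfrak N)$, the discriminant $\sqrt{d_F}$ from the archimedean measure, the Whittaker constants $(k_j-2)!/(4\pi\sqrt{\sigma_j(m_1m_2)})^{k_j-1}$, and the character twists $\omega_\mathfrak N(m_1/\mathtt s)\omega_{f}(\mathtt s)$ and $\omega_{f}(s\mathtt b_i^{-1})$ exactly as on the right-hand side of (\ref{PTF}). The subtlest point is the big-cell parameterization in the presence of nontrivial class number: the cusps of $\GL_2(F)\backslash \GL_2(\A)/K_1(\mathfrak N)K_\infty$ are labeled by $\Cl(F)$, and the Bruhat matrices with lower-left entry of prescribed ideal class must be organized cusp by cusp, which forces the triple sum structure $\sum_i\sum_{u\in U}\sum_{s\in\mathfrak b_i\mathfrak N/\pm}$ in place of the naive single sum over $c\in F^\times$; any normalization slip at this stage propagates directly into the coefficients of the final formula.
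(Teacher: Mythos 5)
The paper offers no proof of this statement: it is quoted verbatim from Knightly--Li \cite[Theorem 5.11]{KL}, and the present article only uses it as an input. Your outline --- adelic Poincar\'e series at level $K_1(\mathfrak N)$, the reproducing identity $\langle\phi,P_m\rangle$ in terms of $W_m^\phi(1)$, the spectral versus geometric evaluation of $\langle T_{\mathfrak n}P_{m_1},P_{m_2}\rangle$, the identity-cell main term with $\Hat T(m_1,m_2,\mathfrak n)$, and the big-cell parameterization by ideal classes $\mathfrak b_i$, unit representatives $u\in U$, and $s\in\mathfrak b_i\mathfrak N/\pm$ yielding twisted Kloosterman sums and $J_{k_j-1}$ factors --- is exactly the strategy carried out in that source, so your proposal matches the actual proof in approach, though it remains a roadmap rather than a verification of the normalizing constants it correctly identifies as the delicate part.
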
 
 
For a fixed level $\mathfrak{N}, $ \cite[Theorem 1]{BDS} gives  an asymptotic for the above trace formula as $k_0\rightarrow\infty$   where $k_0=\min(k_1,\,...\,,k_r)$.  
We discuss some notations to be used further. 
\begin{itemize}
\item Let us consider 
\begin{align}\label{delta_i}
\delta_i =\inf\{\|\sigma(s)\|  \,:\, s\in \mathfrak{b_i}\mathfrak{N}/\pm, s\neq 0 \},
\end{align}
where each $\mathfrak{b_i}$ is as defined in Theorem \ref{Petersson-Trace-Formula}. Note that $\min( \mathfrak{b_i}\mathfrak{N})=\inf\{\|\sigma(s)\|  \,:\, s\in \mathfrak{b_i}\mathfrak{N}/\pm, s\neq 0 \}.$
\item We take $\tilde{\delta}_i=\frac{\delta_i}{2\sqrt{r}}$ and 
let   
\begin{align}\label{A_i}
A_i=\cap_{j=1}^r\{  s\in \mathfrak{b_i}\mathfrak{N}/\pm \,:\, |\sigma_j(s)|\leq  2 \delta_i, s\neq 0\}.
\end{align} 
For a fixed $i,$ $A_i$ is a discrete bounded set in $\R^r.$  This makes $A_i $ finite for each $i$.   
\item We define
$$\gamma_{j}=\max \left \{\sqrt{\sigma_{j}(\eta_i u)} \,| \,i =1,\,...\,,t  ,u\in U, \eta_i u\in F^+\right \} .$$  
\end{itemize}
\begin{theorem}[{{\cite[Theorem 1]{BDS}}}]\label{Main theorem BDS}
Let $\mathfrak{N}$ and $\mathfrak{n}$ be fixed integral ideals in $F$ such that $( \mathfrak{n}, \mathfrak{N}) = 1$.  Let $\omega:\,F^{\times} \backslash \mathbb A^{\times} \to \mathbb C^{\times}$ be a unitary Hecke character. Let the conductor of $\omega$ divide $\mathfrak{N}$ and $\omega_{\infty_j}(x)=\mathrm{sgn} (x)^{k_j} $ for all $j=1,\dots, r$.   For $k = (k_1,k_2,\dots, k_r)$ with all $k_j >2$, let $A_k(\mathfrak{N},\omega)$ denote the space of Hilbert cusp forms of weight $k$ and character $\omega$ with respect to $K_1(\mathfrak{N})$ and let $\mathcal{F}$ be an orthogonal basis for $A_{k}(\mathfrak{N},\omega)$ consisting of eigenfunctions of the Hecke operator $T_{\mathfrak{n}}$.  Let $A_i$'s and $\tilde{\delta}_i$  be as defined above.  Let $k_0=\min \{ k_j \,  | \, j\leq r\}$, and suppose $m_1,m_2 \in \mathfrak{d}^{-1}_+$ satisfy
$$
\frac{2\pi \gamma_j\sqrt{\sigma_{j}(m_1m_2)}}{\tilde{\delta}_i}\in \Big((k_{j}  -1)-(k_{j}  -1)^{\frac{1}{3}},(k_{j}  -1)\Big) \text{ for all }j \leq r.
$$ 
Then,  as $k_0\rightarrow  \infty$,
$$ 
\frac{e^{2\pi tr_{\mathbb{Q}}^F (m_1+m_2)}}{{\psi(\mathfrak{N})}}
 \Bigg[\prod_{j=1}^r  \frac{(k_j-2)!}{(4\pi \sqrt{\sigma_j(m_1m_2)})^{k_j-1}} \Bigg]
\sum_{{\phi} \in \mathcal{F} }\frac{\lambda_\mathfrak{n}^\phi W_{m_1}^\phi(1) \overline{W_{m_2}^\phi(1)}}{\|\phi \|^2}
 $$
$$=\, \Hat{T}(m_1,m_2,\mathfrak{n})\frac{\sqrt{d_F\Nr(\mathfrak{n})}}{\omega_\mathfrak{N}(m_1/\mathtt{s})\omega_{\mathrm{f}}(\mathtt{s})}
+ \sum_{i=1}^t \sum_{u\in U, \eta_i u\in F^+}\sum_{s\in A_i }\Bigg\{ \omega_{\mathrm{f}}(s\b_i^{-1} ) S_{\omega_\mathfrak{N}} (m_1,m_2;\eta_i u \b_i^{-2};s\b_i^{-1})
 $$$$ 
\frac{\sqrt{\Nr(\eta_i u)}}{\Nr(s)}\times  \prod_{j=1}^r\frac{2\pi}{(\sqrt{-1})^{k_j}}J_{k_j-1} \Big(  \frac{4 \pi\sqrt{\sigma_j (\eta_i u m_1m_2 )}}{|\sigma_j(s)|}\Big)
 \Bigg\}+ \o\left(\prod_{j=1}^r \big(k_j-1\big)^{-\frac{1}{3}}\right). $$ 
\end{theorem}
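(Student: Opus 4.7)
The plan is to deduce the stated asymptotic directly from the unconditional Petersson trace formula of Theorem \ref{Petersson-Trace-Formula} by splitting the inner sum over $s \in \mathfrak{b}_i\mathfrak{N}/\pm \setminus \{0\}$ into the finite ``head'' $s \in A_i$ and its ``tail'' $s \in \mathfrak{b}_i\mathfrak{N}/\pm \setminus (A_i \cup \{0\})$. Since $\sigma(\mathfrak{b}_i\mathfrak{N})$ is a lattice in $\mathbb R^r$ and the constraint $|\sigma_j(s)| \le 2\delta_i$ for every $j$ cuts out a bounded box, $A_i$ is finite, so the head sum matches the first inner sum on the right-hand side of the claimed asymptotic verbatim. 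The entire theorem thus reduces to showing that the tail contribution, after multiplication by the prefactor $\prod_j (k_j-2)!/(4\pi\sqrt{\sigma_j(m_1m_2)})^{k_j-1}$, is $\o\bigl(\prod_j (k_j-1)^{-1/3}\bigr)$.

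For any tail element $s$ there exists by definition a coordinate $j_0 \le r$ with $|\sigma_{j_0}(s)| > 2\delta_i = 4\sqrt{r}\,\tilde{\delta}_i$. Plugging this into the Bessel argument and invoking the hypothesis $2\pi\gamma_{j_0}\sqrt{\sigma_{j_0}(m_1m_2)}/\tilde{\delta}_i < k_{j_0}-1$ gives
\[
x_{j_0} := \frac{4\pi\sqrt{\sigma_{j_0}(\eta_i u\, m_1 m_2)}}{|\sigma_{j_0}(s)|}
\;\le\; \frac{4\pi\gamma_{j_0}\sqrt{\sigma_{j_0}(m_1m_2)}}{4\sqrt{r}\,\tilde{\delta}_i}
\;<\; \frac{k_{j_0}-1}{2\sqrt{r}},
\]
so the $j_0$-th Bessel function lies strictly in the subturning regime. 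I would apply the power-series bound $|J_\nu(y)| \le (y/2)^\nu/\Gamma(\nu+1)$ in coordinate $j_0$, extracting via Stirring's formula an exponentially small factor of the form $\bigl(e/(4\sqrt r)\bigr)^{k_{j_0}-1}$; in every other coordinate $j \ne j_0$ I would use Landau's uniform bound $|J_\nu(\,\cdot\,)| \ll \nu^{-1/3}$, which produces an $O((k_j-1)^{-1/3})$ factor matching the expected main-term size.

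Combining these Bessel bounds with the Stirling asymptotic $(k_j-2)! \sim \sqrt{2\pi/(k_j-1)}\,((k_j-1)/e)^{k_j-1}$ and the hypothesis-enforced size $4\pi\sqrt{\sigma_j(m_1m_2)} \sim 2\tilde{\delta}_i (k_j-1)/\gamma_j$, a direct computation shows that the ratio of the normalized tail summand in coordinate $j_0$ to the expected main-term factor $(k_{j_0}-1)^{-1/3}$ is bounded by
\[
(k_{j_0}-1)^{-1/6}\Bigl(\tfrac{e}{4\sqrt{r}}\Bigr)^{k_{j_0}-1},
\]
which is exponentially small since $e/(4\sqrt r) < 1$ for every $r \ge 1$, while the $j \ne j_0$ coordinates contribute only a harmless $O(1)$ factor. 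The Kloosterman sum $S_{\omega_\mathfrak{N}}(m_1,m_2;\eta_i u \mathtt{b}_i^{-2};s\mathtt{b}_i^{-1})$ is controlled by the trivial bound (or a Weil-type estimate) and absorbed against $1/\Nr(s)$; the residual lattice sum $\sum_{s \notin A_i} |\sigma_{j_0}(s)|^{-(k_{j_0}-1)}$ converges absolutely by the standard count of lattice points in shells once $k_{j_0}-1 > r$, uniformly in $k_0$ large. Finally, the outer sums over the finite index sets $i=1,\dots,t$ and $u \in U$ are handled by the triangle inequality.

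The main obstacle is quantitative: one must convert the various Bessel and Stirling estimates into a single consolidated bound whose $j_0$-coordinate truly gains an exponentially small factor while the remaining coordinates contribute at most the order of the expected main term. This requires matching the hypothesis's precise placement of $2\pi\gamma_j\sqrt{\sigma_j(m_1m_2)}/\tilde{\delta}_i$ just below $k_j-1$ against the subtransition power-series regime of $J_{k_{j_0}-1}$, and verifying that the geometric base $e/(4\sqrt r)$ remains bounded away from $1$ uniformly across $F$. A secondary technical point is the uniform-in-$i,k$ control of the lattice-point counting for $\sigma(\mathfrak{b}_i\mathfrak{N})\setminus A_i$, which follows from discreteness of $\sigma(\mathfrak{b}_i\mathfrak{N})$ together with standard volume estimates in $\mathbb{R}^r$.
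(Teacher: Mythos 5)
First, a point of comparison: the paper does not actually prove this statement. Theorem \ref{Main theorem BDS} is imported verbatim from \cite[Theorem 1]{BDS}, so your proposal can only be measured against the strategy of that reference. Your starting point --- splitting the $s$-sum in the exact formula of Theorem \ref{Petersson-Trace-Formula} into the finite head $A_i$ and an infinite tail, and showing the tail is $\o\bigl(\prod_j(k_j-1)^{-1/3}\bigr)$ --- is the right one, and your treatment of a single distinguished coordinate $j_0$ (sub-transition Bessel argument $x_{j_0}<(k_{j_0}-1)/(2\sqrt r)$, the power-series bound, Stirling) is sound as far as it goes.

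The genuine gap is in summing the tail over the lattice when $r\ge 2$. Your per-term bound is $(e/(4\sqrt r))^{k_{j_0}-1}\prod_{j\ne j_0}(k_j-1)^{-1/3}$, which is \emph{uniform in $s$}: it carries no decay as $s$ ranges over the infinitely many elements of $\mathfrak{b}_i\mathfrak{N}/\pm\setminus(A_i\cup\{0\})$, so applying the triangle inequality to it gives a divergent majorant. The rescue you invoke --- absolute convergence of $\sum_{s\notin A_i}|\sigma_{j_0}(s)|^{-(k_{j_0}-1)}$ by ``counting lattice points in shells'' --- fails for $r\ge 2$: the image $\sigma_{j_0}(\mathfrak{b}_i\mathfrak{N})$ is a \emph{dense} subgroup of $\R$, so the slab $\{s:|\sigma_{j_0}(s)|\in[2\delta_i,4\delta_i]\}$ is unbounded in the remaining coordinates and already contains infinitely many lattice points, each contributing at least $(4\delta_i)^{-(k_{j_0}-1)}$ to your sum. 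Shell counts are valid only for shells defined by $\|\sigma(s)\|$ or $\max_j|\sigma_j(s)|$, not by a single archimedean coordinate. To close the argument you must extract decay in $\|\sigma(s)\|$: for instance, apply the sub-transition bound in the coordinate where $|\sigma_j(s)|$ is \emph{maximal} (which is at least $\|\sigma(s)\|/\sqrt r$), handle the coordinates whose Bessel argument has passed the transition point with $|J_\nu(x)|\ll x^{-1/3}$ (these are exactly the coordinates where $|\sigma_j(s)|$ is small, kept in check by $\prod_j|\sigma_j(s)|=\Nr(s)\ge\Nr(\mathfrak{b}_i\mathfrak{N})$), and only then run a genuine $r$-dimensional shell count. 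This case analysis is precisely the content of the auxiliary lemmas of \cite{BDS} (cf.\ \cite[Lemma 10]{BDS}, cited elsewhere in this paper); your one-coordinate argument is essentially the $r=1$ proof of \cite[Theorem 1.7]{JS}, where $\sigma_1(\mathfrak{b}\mathfrak{N})$ is discrete in $\R$ and this obstruction does not arise.
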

 Theorem \ref{Main theorem BDS} is a generalisation of the asymptotic \cite[Theorem 1.7]{JS} for the Petersson's trace formula for classical cusp forms. 
We now  prove the following  refined version  of Theorem \ref{Main theorem BDS} for multi-quadratic number fields.
\begin{proof}[Proof of Theorem \ref{Main application}]
    We have $A_i=\cap_{j=1}^r\{  s\in \mathfrak{b_i}\mathfrak{N}/\pm \,:\, |\sigma_j(s)|\leq     \frac{\delta_i}{\sqrt{r}} , s\neq 0\},$ where $\delta_i=\min(\mathfrak{b}_i\mathfrak{N})$.   Note that \begin{equation}\label{equalityholds}
    \|\sigma(s) \|\leq \sqrt{\sum_{i=1}^r \frac{\delta_i^2}{r}}=\delta_i
     \end{equation} 
    for $s \in A_i.$ 
    However, by choice of $\delta_i,$ we must have equality in equation \eqref{equalityholds} which implies  $A_i\subset B_{\mathfrak{b_i}\mathfrak{N}}.$  Furthermore  we have $|\sigma_j(s)|=\frac{\delta_i}{\sqrt{r}}$ for all $j=1,\dots,r.$ By an argument similar to \cite[Lemma 10]{BDS}, we get $s=\frac{\delta_i}{\sqrt{r}}.$ Thus $s=\sqrt{\tilde{d}}$ for some rational number $\tilde{d}$.   
     Now using Theorem \ref{main cor diophan}, it follows that $s$ must correspond to a trivial solution to the equation \eqref{Dieqnbin}. 
     In the case of non-existence of a trivial solution to the equation \eqref{Dieqnbin} for some index $i$, we can not have $|\sigma_j(s)|=\frac{\delta_i}{\sqrt{r}}$ for all $j=1,\dots,r.$ This makes the set $A_i$ empty and the proof is complete. 
\end{proof}
Theorem \ref{Main application} allows us to obtain the following corollary, which is a variant of \cite[Theorem 2]{BDS}.
\begin{corollary}\label{Single term}
 Let $F$ have odd narrow class number. Let $d_1,\, \dots \, d_n$ be square-free integers with $F=Q\left( \sqrt{d_1},\, \dots\, , \sqrt{d_n}\right) $ and  $ [\Q\left( \sqrt{d_1},\, \dots\, , \sqrt{d_n}\right):\Q]=2^n=r$.  Further let $\mathfrak{b}_1\mathfrak{N}=\tilde{s}\mathcal{O},$ where $\tilde{s}=a\sqrt{\tilde{d}}$, $\tilde{d}=\prod_{l=1}^{t_l} d_l^{\tilde{t}_l}$ with $\tilde{t}_l\in\{0,1\}$ and $a\in \mathbb{Z}$. Under the assumptions of Theorem \ref{Main theorem BDS} and $S_{\omega_\mathfrak{N}} (m_1,m_2;\eta_1 \b_1^{-2};\tilde{s}\b_1^{-1})\neq 0$ for $m_1$ and $m_2$,
  as $k_0\rightarrow  \infty$,
\begin{align*}
 \Bigg| \frac{e^{2\pi tr_{\mathbb{Q}}^F (m_1+m_2)}}{{\psi(\mathfrak{N})}} \prod_{j=1}^r  \frac{(k_j-2)!}{(4\pi \sqrt{|\sigma_j(m_1m_2)|})^{k_j-1}} 
& \sum_{{\phi}\in \mathcal{F} }\frac{\lambda_\mathfrak{n}^\phi W_{m_1}^\phi(1) \overline{W_{m_2}^\phi(1)}}{\|\phi\|^2} - \Hat{T}(m_1,m_2,\mathfrak{n})\frac{\sqrt{d_F\Nr(\mathfrak{n})}}{\omega_\mathfrak{N}(m_1/\mathtt{s})\omega_{\mathrm{f}}(\mathtt{s})}\Bigg| \\ 
 &\gg \prod_{j=1}^r (k_j-1)^{-\frac{1}{3}}.
\end{align*}
\end{corollary}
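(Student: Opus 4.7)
The plan is to combine Theorem \ref{Main application} with the standard turning-point asymptotics for Bessel functions, in the spirit of \cite[Theorem 2]{BDS}. First, since $F$ has odd narrow class number, the class number is odd as well, so squaring is a bijection on the ideal class group and the equation $[\mathfrak{b}]^{2}[\mathfrak{n}]=1$ has a unique solution; thus $t=1$ in Theorem \ref{Petersson-Trace-Formula}. Next, the generator $\tilde{s}=a\sqrt{\tilde{d}}$ of $\mathfrak{b}_1\mathfrak{N}$, written in the basis $\{\prod_{l=1}^{n}\sqrt{d_l}^{t_l} : t\in\{0,1\}^{n}\}$, has exactly one nonzero coordinate (the one indexed by $(\tilde{t}_1,\dots,\tilde{t}_n)$, with value $a$), so the corresponding tuple is a trivial solution of \eqref{Dieqnbin}. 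An application of Lemma \ref{sigma(s)>=r0.5nm(s)} to an arbitrary element $s\in\mathfrak{b}_1\mathfrak{N}=\tilde{s}\mathcal{O}$ gives $\|\sigma(s)\|\geq \sqrt{r}\,|\mathrm{Nm}(s)|^{1/r}\geq \|\sigma(\tilde{s})\|$, showing that $\tilde{s}$ realises $\min(\mathfrak{b}_1\mathfrak{N})$. Therefore Theorem \ref{Main application} applies and collapses the geometric side to a single sum over $u\in U$.

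After this collapse, the difference inside the absolute value in the corollary equals
\begin{equation*}
\sum_{\substack{u\in U \\ \eta_1 u\in F^+}} C(u)\,\prod_{j=1}^{r} J_{k_j-1}\!\left(\frac{4\pi\sqrt{\sigma_j(\eta_1 u m_1 m_2)}}{|\sigma_j(s_1')|}\right)+\o\!\left(\prod_{j=1}^{r}(k_j-1)^{-1/3}\right),
\end{equation*}
where each $C(u)$ is the product of the unimodular factor $\omega_{\mathrm{f}}(s_1'\b_1^{-1})$, the Kloosterman sum $S_{\omega_\mathfrak{N}}(m_1,m_2;\eta_1 u\b_1^{-2};s_1'\b_1^{-1})$ (non-zero at $u=1$ by hypothesis, since $\tilde{s}=s_1'\b_1\b_1^{-1}$ up to a rational factor absorbed into the character), and a norm ratio independent of $k$. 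Under the turning-point condition on $m_1 m_2$ inherited from Theorem \ref{Main theorem BDS}, every Bessel argument lies in the window $\bigl((k_j-1)-(k_j-1)^{1/3},\,k_j-1\bigr)$, so the classical Airy approximation yields $|J_{k_j-1}(x)|\asymp (k_j-1)^{-1/3}$ and each summand has magnitude of order $\prod_{j=1}^{r}(k_j-1)^{-1/3}$.

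The final step is to rule out destructive interference across the finite set $\{u\in U:\eta_1 u\in F^+\}$. Following the corresponding step in the proof of \cite[Theorem 2]{BDS}, we exploit the remaining freedom in the choice of $m_1,m_2$ inside the admissible turning-point window: the Airy phase of $J_{k_j-1}$ at $k_j-1$ depends continuously and non-trivially on $\sigma_j(m_1 m_2)$, and varying over the $2^r$ unit-class representatives produces phase vectors that can be separated by an arbitrarily small adjustment of $m_1,m_2$. This lets us select $m_1,m_2$ for which the $u=1$ contribution does not cancel against the remaining terms, producing an aggregate bounded below by $\prod_{j=1}^{r}(k_j-1)^{-1/3}$ up to an absolute constant, which dominates the $\o(\cdot)$ error and proves the claim.

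The main obstacle is precisely this final phase-separation step; every other ingredient (uniqueness $t=1$, triviality of the minimiser, the explicit form of $s_1'$ and the fact that $|A_1|=1$) is delivered cleanly by the hypotheses together with Theorem \ref{Main application}, Theorem \ref{main cor diophan} and Lemma \ref{sigma(s)>=r0.5nm(s)}, and the turning-point size bound $|J_{k-1}(x)|\asymp (k-1)^{-1/3}$ is standard. The delicate analytic input consists in transcribing the phase-uncoupling argument of \cite[Theorem 2]{BDS} to the present multi-quadratic setting, which is feasible precisely because Theorem \ref{Main application} replaces the ambiguous sum over $s\in A_1$ by a single explicit rational $s_1'=a\,2^{1-n/2}$.
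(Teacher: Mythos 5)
Your first half follows the paper's route: $t=1$ from the odd narrow class number, the identification of $\tilde{s}=a\sqrt{\tilde d}$ as a minimiser via Lemma \ref{sigma(s)>=r0.5nm(s)} (equivalently Corollary \ref{examplecorollary} plus Theorem \ref{Main cor 1}), hence a trivial solution of \eqref{Dieqnbin} and the applicability of the first part of Theorem \ref{Main application}. The gap is in your final step. The paper uses the full strength of the odd narrow class number hypothesis: by \cite[Lemma 9]{BDS} it yields not only $t=1$ but also $\left|\{u\in U:\eta_1 u\in F^+\}\right|=1$, so after Theorem \ref{Main application} the geometric side is a \emph{single} term; its Kloosterman factor is nonzero by hypothesis and each Bessel factor has size comparable to $(k_j-1)^{-1/3}$ in the transition window (this is the content of the step ``proceed as in \cite[Corollary 14]{BDS}''), so no interference question ever arises and the lower bound follows at once.

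You instead retain a sum over up to $2^r$ units and propose to rule out destructive interference by ``an arbitrarily small adjustment of $m_1,m_2$''. This does not prove the corollary as stated: $m_1,m_2$ are given, subject only to the turning-point condition and the nonvanishing of $S_{\omega_\mathfrak{N}}(m_1,m_2;\eta_1\b_1^{-2};\tilde{s}\b_1^{-1})$, and the asserted lower bound must hold for them; choosing different $m_1,m_2$ would only give the weaker statement that the bound holds for \emph{some} admissible pair. The mechanism itself is also dubious: in the window $\bigl((k_j-1)-(k_j-1)^{1/3},\,k_j-1\bigr)$ the functions $J_{k_j-1}$ are in the pre-turning-point Airy regime, positive and non-oscillatory, so there is no ``Airy phase'' to separate; any cancellation among the $u$-terms would come from the signs or phases of the Kloosterman sums, which are arithmetic in $m_1,m_2$ and are not controlled by small continuous perturbations, and you would additionally need the perturbed $m_1,m_2$ to remain in $\mathfrak{d}^{-1}_+$, to satisfy the window condition, and to keep the Kloosterman sum nonzero. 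The correct repair is simply the observation you omitted: odd narrow class number forces the $u$-sum to have exactly one term.
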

\begin{proof}
We have $t=1$ and $\left|\{u \,| \, u\in U, \eta_1 u\in F^+\}\right|=1$ since narrow class number of $F$ is odd (see \cite[Lemma 9]{BDS}).
 Using Corollary \ref{examplecorollary}, we get $$\min\left(\mathfrak{b}_1\mathfrak{N}\right)\geq 2^{n-1} \left|\text{Nm}\left(|a|\sqrt{\tilde{d}}\right)\right|^{\frac{1}{2^n}}=2^{n-1}|a|\sqrt{\tilde{d}}.$$
 However $\|\sigma(\tilde{s})\|=2^{n-1}|a|\sqrt{\tilde{d}}$ by Theorem \ref{Main cor 1},
which implies $\min(\mathfrak{b}_1\mathfrak{N})=2^{n-1}|a|\sqrt{\tilde{d}}.$ This implies the Diophantine equation given by  \eqref{Dieqnbin} corresponding to $\mathfrak{b}_1\mathfrak{N}$ has a trivial solution. This ensures we can apply the first part of Theorem \ref{Main application}.  We see that the lower bound of $\prod_{j=1}^r (k_j-1)^{-\frac{1}{3}}$ holds after proceeding similar to the proof of  \cite[Corollary 14]{BDS}.
\end{proof}
Theorem \ref{Main application} also talks about a scenario, where it is not possible to get a corollary like the above. To be precise, suppose for all $i$, there is no trivial solution for equation \eqref{Dieqnbin},  then it is not possible to apply Theorem \ref{Main application} to obtain Corollary \ref{Single term}, because
$$ 
\frac{e^{2\pi tr_{\mathbb{Q}}^F (m_1+m_2)}}{{\psi(\mathfrak{N})}}
 \Bigg[\prod_{j=1}^r  \frac{(k_j-2)!}{(4\pi \sqrt{\sigma_j(m_1m_2)})^{k_j-1}} \Bigg]
\sum_{{\phi} \in \mathcal{F} }\frac{\lambda_\mathfrak{n}^\phi W_{m_1}^\phi(1) \overline{W_{m_2}^\phi(1)}}{\|\phi \|^2}
$$$$ -\, \Hat{T}(m_1,m_2,\mathfrak{n})\frac{\sqrt{d_F\Nr(\mathfrak{n})}}{\omega_\mathfrak{N}(m_1/\mathtt{s})\omega_{\mathrm{f}}(\mathtt{s})}=
 \o\left(\prod_{j=1}^r \big(k_j-1\big)^{-\frac{1}{3}}\right). $$

We now consider another application of Theorem \ref{Main application} to obtain a lower bound for the discrepancy between the Sato-Tate measure and a given discrete measure. This is motivated by the application of \cite[Theorem 1.6]{JS}  to obtain a lower bound like \cite[Theorem 1.6]{JS} for the classical setting of $F=\Q$. For a prime ideal $\mathfrak{p},$ let  
$\kappa^\phi_{\mathfrak{p}^l}=\frac{\lambda^\phi_{\mathfrak{p}^l}}{\sqrt{\Nr(\mathfrak{p}^l)}}$ for a natural number $l.$ We have $\kappa^\phi_{\mathfrak{p}^l}\in \mathbb{R}$  and by the Ramanujan conjecture $\kappa^\phi_{\mathfrak{p}}\in [-2,2].$ Let $X_l$ denote Chebyshev polynomial of degree $l$, then by \cite[Proposition 4.5]{KL}, $X_l\left(\kappa^\phi_{\mathfrak{p}}\right)=\kappa^\phi_{\mathfrak{p}^l}.$
Let us consider the Sato-Tate measure defined  by
$$ \mu_\infty(x)=\frac{1}{\pi}\sqrt{1-\frac{x^2}{4}} ,
$$
 for $x\in [-2,2].$ 
Define a weight $w_\phi=\frac{|W^\phi_m(1)|^2}{\|\phi\|^2},$
then  \cite[Theorem 1.1]{KL} states that  the 
$w_\phi$-weighted distribution of the eigenvalues $\kappa^\phi_{\mathfrak{p}}$ is asymptotically uniform relative
to the Sato-Tate measure as $\Nr(\mathfrak{N})$ goes to $\infty$. 
That is for any continuous  function $f:[-2,2]\rightarrow \C,$
$$
\lim_{\Nr(\mathfrak{N})\rightarrow \infty \atop {(\mathfrak{p}, \mathfrak{N}) = 1}} \frac{\sum_\phi  f( \kappa_\mathfrak{p}^\phi)   w_\phi }{\sum_\phi w_\phi}=\int_{-2}^2 f(x)\,d\mu_\infty(x).
$$
 A natural question to ask is to get an effective version of the above equidistribution result. For this we define 
the discrete measure $$\tilde{\nu}_{k,\mathfrak{N}}:=\prod_{j=1}^r  \frac{(4\pi)^{k_j-1}}{ (k_j-2)!}
\sum_{{\phi}\in \mathcal{F} }\frac{\delta_{\kappa^\phi_{\mathfrak{p}}} }{\|\phi \|^2}, $$ 
where   $\delta_x$ is the Dirac measure at $x$ and $\mathcal{F}$ be an orthogonal basis for $A_{k}(\mathfrak{N},\omega)$ consisting of eigenfunctions of the Hecke operator $T_{\mathfrak{n}}$.  
Given two probability measures $\mu_1$ and $\mu_2$ on a closed interval $\Omega \subset \mathbb{R},$ the discrepancy between $\mu_1$ and $\mu_2$ is given by 
$$ D(\mu_1,\mu_2) := \sup\{   |\mu_1(I)-\mu_2(I)|\, :\, I=[a,b]\subset \Omega \}.$$
We obtain the following variant of \cite[Theorem 1.6]{JS} and \cite[Theorem 3]{BDS} for the space $A_k(\mathfrak{N},1),$ which gives us information about the rate of convergence of $\tilde{\nu}_{k_l,\mathfrak{N}}$ to $\mu_\infty.$ 
\begin{corollary}\label{application sato tate lower}
  Let the narrow class number of $F$ be equal to 1. Let $d_1,\, \dots \, d_n$ be square-free integers with $F=Q\left( \sqrt{d_1},\, \dots\, , \sqrt{d_n}\right) $ and  $ [\Q\left( \sqrt{d_1},\, \dots\, , \sqrt{d_n}\right):\Q]=2^n=r$. Further let $\mathfrak{b}_1\mathfrak{N}=\tilde{s}\mathcal{O},$ where $\tilde{s}=a\sqrt{\tilde{d}}$, $\tilde{d}=\prod_{l=1}^{t_l} d_l^{\tilde{t}_l}$ with $\tilde{t}_l\in\{0,1\}$ and $a\in \mathbb{Z}$.
 Suppose $S_{1} (m_1,m_2;1;\tilde{s})\neq 0$ for $m_1$ and $m_2$, then
 there exists an infinite sequence of weights $k_l=(k_{l_1},...,k_{l_r})$ with $(k_{l})_0\rightarrow \infty $  such that  $$D(\tilde{\nu}_{k_l,\mathfrak{N}},\mu_\infty)\gg\frac{1}{\big( \log k_{l_j}\big)^2  \times\prod_{i=1}^r (k_{l_i}-1)^{\frac{1}{3}}    }.
 $$
for all $j\in \{1,...\,,r\}.$
\end{corollary}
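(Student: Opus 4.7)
The argument adapts the Chebyshev-polynomial strategy of Jung--Sardari \cite[Theorem 1.6]{JS} and its Hilbert-modular analogue in \cite[Theorem 3]{BDS} to the multi-quadratic setting, with the key improvement being the use of the refined trace formula Corollary \ref{Single term} in place of the cruder version used in \cite{BDS}. Via the Hecke identity $X_l(\kappa^\phi_\mathfrak{p}) = \lambda^\phi_{\mathfrak{p}^l}/\sqrt{\Nr(\mathfrak{p}^l)}$, the pairing $\int_{-2}^{2} X_l \, d\tilde{\nu}_{k,\mathfrak{N}}$ coincides, up to the explicit prefactor built into $\tilde{\nu}_{k,\mathfrak{N}}$, with the spectral side of the Petersson trace formula at the Hecke operator $T_{\mathfrak{p}^l}$ (paired against a suitable $(m_1,m_2)$). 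Since the Chebyshev polynomials are orthogonal to the Sato--Tate measure, $\int X_l \, d\mu_\infty = 0$ for $l \geq 1$, so $\int X_l \, d(\tilde{\nu}_{k,\mathfrak{N}} - \mu_\infty)$ is exactly the spectral quantity controlled by the trace formula.

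For each large integer $l$, I would choose $m_1, m_2 \in \mathfrak{d}_+^{-1}$ together with a weight $k = k_l$ satisfying the admissibility condition of Theorem \ref{Main theorem BDS},
$$ \frac{2\pi \gamma_j \sqrt{\sigma_j(m_1 m_2)}}{\tilde{\delta}_1} \in \bigl((k_{l_j}-1) - (k_{l_j}-1)^{1/3},\, k_{l_j}-1\bigr), \qquad j = 1, \dots, r, $$
at every archimedean place. The assumptions that the narrow class number is $1$ (so $t=1$, $|\{u : \eta_1 u \in F^+\}| = 1$, and we may take $\mathfrak{b}_1 = \mathcal{O}$) and that $\mathfrak{b}_1\mathfrak{N} = \tilde{s}\mathcal{O}$ with $\tilde{s} = a\sqrt{\tilde{d}}$ together ensure, as in the proof of Corollary \ref{Single term}, that the Diophantine equation \eqref{Dieqnbin} admits a trivial solution, so the first part of Theorem \ref{Main application} applies and reduces the sum over $A_1$ to a single term. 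The nonvanishing hypothesis $S_1(m_1, m_2; 1; \tilde{s}) \neq 0$ prevents that term from being annihilated, and Corollary \ref{Single term} then yields
$$ \left| \int X_l \, d(\tilde{\nu}_{k_l, \mathfrak{N}} - \mu_\infty) \right| \gg \prod_{i=1}^{r} (k_{l_i}-1)^{-1/3}. $$

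Koksma's inequality $\bigl|\int f\,d(\mu_1-\mu_2)\bigr| \leq \|f\|_{\mathrm{BV}} \cdot D(\mu_1,\mu_2)$, applied to $f = X_l$ with the standard bound $\|X_l\|_{\mathrm{BV}} \ll l^2$, converts the above into
$$ D(\tilde{\nu}_{k_l,\mathfrak{N}}, \mu_\infty) \gg \frac{1}{l^2 \prod_{i=1}^{r} (k_{l_i}-1)^{1/3}}. $$
Tracking the admissibility inequality as $l$ varies, the coupling forces $l \asymp \log k_{l_j}$ for every $j$ (as in the proof scheme of \cite[Theorem 3]{BDS}), which substituted into the previous display gives the claimed bound. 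Iterating over $l \to \infty$ yields the required infinite sequence $(k_l)$ with $(k_l)_0 \to \infty$.

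The main obstacle is the construction in the second paragraph: one must choose $m_1, m_2 \in \mathfrak{d}_+^{-1}$ whose $r$ conjugates $\sigma_j(m_1 m_2)$ all land simultaneously in the narrow admissibility windows of width $(k_{l_j}-1)^{-2/3}$. This balancing exploits the explicit formula $\min(\mathfrak{b}_1\mathfrak{N}) = 2^{n-1}|a|\sqrt{\tilde{d}}$ derived in the proof of Corollary \ref{Single term}, together with the multi-quadratic structure of $F$ via Theorem \ref{Main cor 1}, and is the essential place where the refinements of Section 2 enter. By comparison, verifying the total variation bound $\|X_l\|_{\mathrm{BV}} \ll l^2$ and handling the main-term contribution $\widehat{T}(m_1,m_2,\mathfrak{p}^l)$ (which vanishes for generic $(m_1,m_2,l)$, and otherwise can be subtracted as a known quantity) are routine.
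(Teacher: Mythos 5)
Your overall route --- pairing Chebyshev polynomials against $\tilde{\nu}_{k,\mathfrak{N}}-\mu_\infty$, identifying that pairing with the spectral side of the trace formula at a Hecke index which is a power of $\mathfrak{p}$, lower-bounding it through the single surviving Bessel term supplied by Corollary \ref{Single term}, and converting to a discrepancy bound via a Koksma-type inequality with $\|X_l\|_{\mathrm{BV}}\ll l^2$ and $l\asymp\log k_{l_j}$ --- is the same one the paper follows, namely the proof of \cite[Theorem 3]{BDS} fed by an analogue of \cite[Corollary 16]{BDS}. However, the step you single out as the ``main obstacle'' is both misstated and not where the difficulty lies: the admissibility condition of Theorem \ref{Main theorem BDS} asks that $2\pi\gamma_j\sqrt{\sigma_j(m_1m_2)}/\tilde{\delta}_1$ lie in an interval of length $(k_j-1)^{1/3}$, not in a window of width $(k_{l_j}-1)^{-2/3}$, and in the discrepancy argument one does not tune $m_1,m_2$ to a given weight. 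Rather, one takes the Hecke index $\mathfrak{n}=\mathfrak{p}^{2l}$ with $m_1m_2$ correspondingly supported on $\mathfrak{p}^{2l}$ up to fixed factors (so that each $\sigma_j(m_1m_2)$ grows with $l$), and then chooses the integer weight components $k_{l_j}$, independently at each place, to fall into these widening windows; no simultaneous Diophantine balancing is required, and it is exactly this choice that forces $l\asymp\log k_{l_j}$ for every $j$ and produces the $(\log k_{l_j})^2$ loss. The refinements of Section 2 enter through Theorem \ref{Main application} and Corollary \ref{Single term} (the reduction of $A_1$ to a single point), not through the selection of $m_1,m_2$.

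The genuine gap is your treatment of the Kloosterman sums along the sequence. Corollary \ref{Single term} requires $S_{\omega_\mathfrak{N}}(m_1,m_2;\eta_1\mathtt{b}_1^{-2};\tilde{s}\mathtt{b}_1^{-1})\neq 0$, where $\eta_1$ generates $\mathfrak{b}_1^{2}\mathfrak{n}$; as $\mathfrak{n}=\mathfrak{p}^{2l}$ varies, $\eta_1$ varies with it, while the hypothesis of the corollary being proved is the single nonvanishing $S_{1}(m_1,m_2;1;\tilde{s})\neq 0$. You assert that this hypothesis ``prevents that term from being annihilated,'' but you give no argument that it controls the sums for every $l$: one must show that the square factor generating $\mathfrak{p}^{2l}$ can be pulled out of the sum (for instance, it drops out of the Legendre symbols in the relevant Sali\'e-type evaluation), so that nonvanishing at the fixed datum propagates to the whole sequence. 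This reduction is precisely the content of the analogue of \cite[Corollary 16]{BDS} that the paper establishes before running the argument of \cite[Theorem 3]{BDS}; without it, your lower bound on $\bigl|\int X_l\,d(\tilde{\nu}_{k_l,\mathfrak{N}}-\mu_\infty)\bigr|$ is only available for those $l$ for which the $l$-dependent Kloosterman sum happens not to vanish, and the existence of the claimed infinite sequence of weights does not follow.
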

\begin{proof}
   An analogue of  \cite[Corollary 16]{BDS} can be easily obtained.  We can then apply this analogue version and proceed similarly to the proof of \cite[Theorem 3]{BDS} to prove the corollary.
\end{proof} 
\subsection*{Acknowledgements}
 The author is thankful to  M. Ram Murty and Baskar Balasubramanyam for their valuable suggestions on the article. 
\bibliographystyle{alpha}
\bibliography{Sato-TateNT.bib}

\begin{thebibliography}{CEZA22}

\bibitem[BDS23]{BDS}
Baskar Balasubramanyam, Jishu Das, and Kaneenika Sinha.
\newblock A discrepancy result for {H}ilbert modular forms.
\newblock {\em arXiv:2307.16736v2}, pages 1--23, 2023.

\bibitem[BLS07]{Benjamin}
Elliot Benjamin, Franz Lemmermeyer, and Chip Snyder.
\newblock On the unit group of some multiquadratic number fields.
\newblock {\em Pacific J. Math.}, 230(1):27--40, 2007.

\bibitem[BLT10]{Balu}
R.~Balasubramanian, F.~Luca, and R.~Thangadurai.
\newblock On the exact degree of {$\Bbb Q(\sqrt{a_1,}\sqrt{a_2},\dots,\sqrt{a_\ell})$} over {$\Bbb Q$}.
\newblock {\em Proc. Amer. Math. Soc.}, 138(7):2283--2288, 2010.

\bibitem[CE22]{CEMM}
Mohamed~Mahmoud Chems-Eddin.
\newblock Unit groups of some multiquadratic number fields and 2-class groups.
\newblock {\em Period. Math. Hungar.}, 84(2):235--249, 2022.

\bibitem[CEZA22]{CEMMZ}
Mohamed~Mahmoud Chems-Eddin, Abdelkader Zekhnini, and Abdelmalek Azizi.
\newblock Unit groups of some multiquadratic number fields of degree 16.
\newblock {\em S\~{a}o Paulo J. Math. Sci.}, 16(2):1091--1096, 2022.

\bibitem[Cox89]{Cox}
David~A. Cox.
\newblock {\em Primes of the form {$x^2 + ny^2$}}.
\newblock A Wiley-Interscience Publication. John Wiley \& Sons, Inc., New York, 1989.
\newblock Fermat, class field theory and complex multiplication.

\bibitem[Das24]{JD}
Jishu Das.
\newblock A lower bound for the discrepancy in a {S}ato–{T}ate type measure.
\newblock {\em Ramanujan J.}, 65(2):637 -- 658, 2024.

\bibitem[JS20]{JS}
Junehyuk Jung and Naser~Talebizadeh Sardari.
\newblock Asymptotic trace formula for the {H}ecke operators.
\newblock {\em Math. Ann.}, 378(1-2):513--557, 2020.
\newblock With an appendix by Simon Marshall.

\bibitem[KBM22]{Babu}
C.~G. Karthick~Babu and Anirban Mukhopadhyay.
\newblock Quadratic residue pattern and the {G}alois group of {$\Bbb{Q}(\sqrt{a_1}, \sqrt{a_2}, \dots, \sqrt{a_n})$}.
\newblock {\em Proc. Amer. Math. Soc.}, 150(10):4277--4285, 2022.

\bibitem[KL08]{KL}
Andrew Knightly and Charles Li.
\newblock Petersson's trace formula and the {H}ecke eigenvalues of {H}ilbert modular forms.
\newblock In {\em Modular forms on {S}chiermonnikoog}, pages 145--187. Cambridge Univ. Press, Cambridge, 2008.

\bibitem[Kub56]{Kubota}
Tomio Kubota.
\newblock \"uber den bizyklischen biquadratischen {Z}ahlk\"orper.
\newblock {\em Nagoya Math. J.}, 10:65--85, 1956.

\bibitem[Kur50]{Kuroda}
Sigekatu Kuroda.
\newblock \"uber die {K}lassenzahlen algebraischer {Z}ahlk\"orper.
\newblock {\em Nagoya Math. J.}, 1:1--10, 1950.

\bibitem[LLW14]{Lau-Li-Wang}
Yuk-Kam Lau, Charles Li, and Yingnan Wang.
\newblock Quantitative analysis of the {S}atake parameters of {${\rm GL}_2$} representations with prescribed local representations.
\newblock {\em Acta Arith.}, 164(4):355--380, 2014.

\bibitem[Mor69]{Mordell}
L.~J. Mordell.
\newblock {\em Diophantine equations}.
\newblock Pure and Applied Mathematics, Vol. 30. Academic Press, London-New York, 1969.

\bibitem[MS09]{MSeffective}
M.~Ram Murty and Kaneenika Sinha.
\newblock Effective equidistribution of eigenvalues of {H}ecke operators.
\newblock {\em J. Number Theory}, 129(3):681--714, 2009.

\bibitem[MS10]{MS2}
M.~Ram Murty and Kaneenika Sinha.
\newblock Factoring newparts of {J}acobians of certain modular curves.
\newblock {\em Proc. Amer. Math. Soc.}, 138(10):3481--3494, 2010.

\bibitem[Nic83]{Peter}
Peter~J. Nicholls.
\newblock The minima of indefinite binary quadratic forms.
\newblock {\em J. Number Theory}, 16(1):19--30, 1983.

\bibitem[Pet32]{PH}
Hans Petersson.
\newblock \"{U}ber die {E}ntwicklungskoeffizienten der automorphen {F}ormen.
\newblock {\em Acta Math.}, 58(1):169--215, 1932.

\bibitem[Sel56]{Sel}
A.~Selberg.
\newblock Harmonic analysis and discontinuous groups in weakly symmetric {R}iemannian spaces with applications to {D}irichlet series.
\newblock {\em J. Indian Math. Soc. (N.S.)}, 20:47--87, 1956.

\bibitem[SZ24]{SZ}
Peter Sarnak and Nina Zubrilina.
\newblock Convergence to the {P}lancherel measure of {H}ecke eigenvalues.
\newblock {\em Acta Arith.}, 214:191--213, 2024.

\bibitem[TW16]{TW}
Hengcai Tang and Yingnan Wang.
\newblock Quantitative versions of the joint distributions of {H}ecke eigenvalues.
\newblock {\em J. Number Theory}, 169:295--314, 2016.

\bibitem[Wad66]{Wada}
Hideo Wada.
\newblock On the class number and the unit group of certain algebraic number fields.
\newblock {\em J. Fac. Sci. Univ. Tokyo Sect. I}, 13:201--209 (1966), 1966.

\end{thebibliography}
\end{document}